\theoremstyle{plain}
\newtheorem{Lem}{Lemma}[section]
\newtheorem{Thm}[Lem]{Theorem}
\newtheorem{Prop}[Lem]{Proposition}
\newtheorem{Cor}[Lem]{Corollary}
\theoremstyle{definition}
\newtheorem{Def}[Lem]{Definition}
\newcommand{\Rats}{\mathbb{Q}}
\newcommand{\Ints}{\mathbb{Z}}
\newcommand\Aut{\operatorname{Aut}}
\newcommand\stab{\operatorname{stab}}
\begin{document}

\title{Matroid automorphisms of the root system $H_4$}
\author{Chencong Bao}
\address{Dept. of Mathematics\\
	Lafayette College\\
   	Easton, PA  18042-1781}

\author{Camila Freidman-Gerlicz}
\address{Dept. of Mathematics \& Computer Science \\
Claremont McKenna College\\
Claremont, CA 91711}
	
	\author{Gary Gordon}
\address{Dept. of Mathematics\\
	Lafayette College\\
   	Easton, PA  18042-1781}
 \email{gordong@lafayette.edu}
 
	\author{Peter McGrath}
\address{Dept. of Mathematics\\
	Lafayette College\\
   	Easton, PA  18042-1781}

 \author{Jessica Vega}
\address{Dept. of Mathematics\\
Loyola Marymount University \\
Los Angeles, CA 90045 }

 \thanks{Research supported by NSF grant DMS-055282.}

\keywords{Matroid automorphism}

\begin{abstract}  We study the rank-4 linear matroid $M(H_4)$ associated with the 4-dimensional root system $H_4$.  This root system coincides with the vertices of the 600-cell, a 4-dimensional regular solid.  We determine the automorphism group of this matroid, showing half of the 14,400 automorphisms are geometric and half are not.  We prove this group is transitive on the flats of the matroid, and also prove  this group action is primitive.  We use the  incidence properties of the flats and the {\it orthoframes} of the matroid as a tool to understand these automorphisms, and interpret the flats geometrically.

\end{abstract}

\maketitle
  \markboth{\sc{C. Bao, C. Friedman-Gerlicz, G. Gordon, P. McGrath, and J. Vega}}{\sc{Matroid automorphisms of the root system $H_4$}}
  
\section{Introduction}  Regular polytopes in 4-dimensions are notoriously difficult to understand geometrically.  Coxeter's classic text  \cite{cox}  is an excellent resource, concentrating on both the metric properties  and the symmetry groups of regular polytopes.  Another approach to understanding these polytopes is through combinatorics; we use matroids to  model the linear dependence of a collection of vectors associated to the polytope.  That is the context for this paper, and we concentrate on the matroid associated with the 120-cell or the 600-cell, two dual 4-dimensional regular polytopes.

The connection between polytopes and matroids, or, more generally, between root systems and matroids, is as follows.  Given a finite set $S$ of vectors in $\mathbb{R}^n$ possessing a high degree of symmetry,  define the (linear) matroid $M(S)$ as the dependence matroid for the set $S$ over $\mathbb{R}$.  Then there should be a close relationship between the symmetry group of the original set $S$ ({\it geometric} symmetry) and the matroid automorphism group $\Aut(M(S))$ ({\it combinatorial} symmetry).  In particular, every geometric symmetry necessarily preserves the dependence structure of $S$, so gives rise to a matroid automorphism.
%
%

The root system $H_4$ can be obtained by choosing the 120 vectors in $\mathbb{R}^4$ that form the vertices of the 600-cell.  These vectors come in 60 pairs, and each pair corresponds to a single point in the matroid.  Thus,  $M(H_4)$ is a rank-4 matroid on 60 points.  

This paper    generalizes and extends \cite{eg}.  In particular, we are interested in understanding the structure of the matroid automorphism group  $\Aut(M(H_4))$.  We show (Theorem~\ref{T:aut}) that $\Aut(M(H_4))$ contains {\it non-geometric} automorphisms in the sense that half of the 14,400 elements of $\Aut(M(H_4))$ do not arise from the Coxeter/Weyl group $W(H_4)$.   We also prove the automorphism group of $M(H_4)$ acts transitively on each class of flats of the matroid (Lemma~\ref{L:trans}), and that the action is primitive (Theorem~\ref{T:prim}).  A key tool for understanding the structure of the automorphisms is the incidence relation among the flats of $M(H_4)$ (Lemma~\ref{L:flatinc} and Proposition~\ref{P:planeintersect}).  This incidence structure allows us to compute the stabilizer of a point of the matroid (Lemma~\ref{L:stab}), a fact we need to understand the structure of the group.

\begin{figure}[htbp]
\begin{center}
\includegraphics[width=4in]{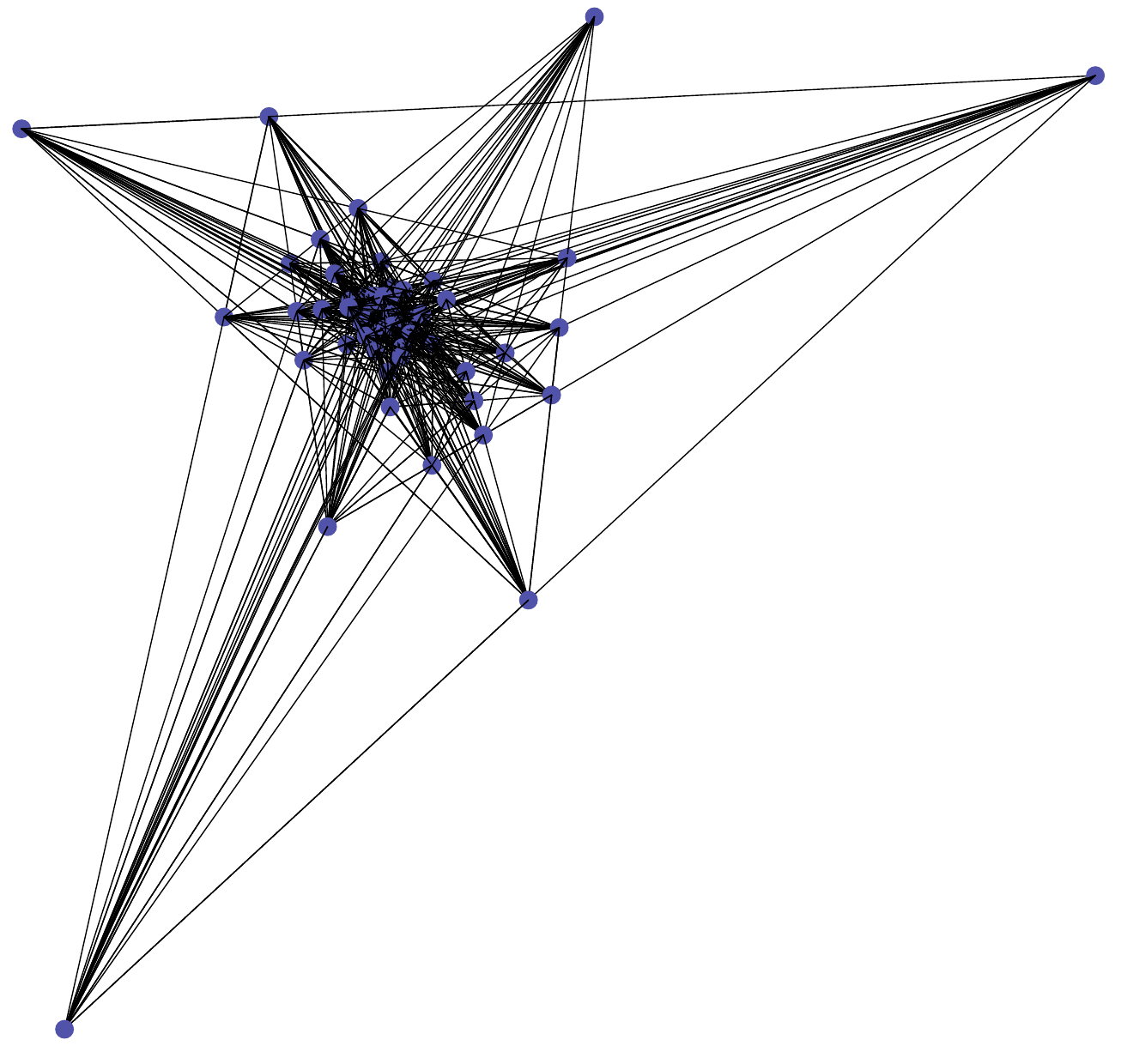}
\caption{A projection of the matroid $M(H_4)$.}
\label{F:alllines}
\end{center}
\end{figure}

 The connection between the geometric and combinatorial symmetry of certain root systems has been explored in \cite{dut,eg,fggp,fglp}.  In \cite{fglp}, matroid automorphism groups  are computed for the root systems $A_n, B_n$ and $D_n$, while \cite{eg} considers the root system $H_3$ associated with the icosahedron and \cite{fggp} examines the matroid associated with the root system $F_4$.  The general case is treated in \cite{dut}, where a computer program is employed to show that $\Aut (M(S))\cong G_S/W$ for all root systems $S$ {\it except} $F_4, H_3$ and $H_4$, where $G_S$ is the Coxeter/Weyl group associated with $S$ and $W$ is either the 2-element group $\Ints_2$ (when $G$ has {\it central inversion}) or $W$ is trivial (when $G$ does not have central inversion).  No attempt is made to understand the structure of these matroids in \cite{dut}, however.
 
 Other models for connecting geometric and combinatorial symmetry are possible, of course.  In particular, since each pair of vectors $\pm {\bf v}$ in a root system corresponds to a double point in the associated linear matroid, we could consider both vectors in the matroid.  This has the effect of doubling the number of automorphisms for each such pair; in our case, this increases the number of automorphisms by a factor of $2^{60}$.  Alternatively, we could, associate an {\it oriented} matroid with the root system.  This doubles the number of automorphisms considered here.  Another option is to consider a projective version of the  root system.  We point out, however, that all of these modifications differ from our treatment in transparent ways that do not change our understanding of the connection between the geometry and the combinatorics.
  
This paper is organized as follows:  The matroid $M(H_4)$ is defined as the column dependence matroid for a $4 \times 60$ matrix in Section~\ref{S:def}.  In Section~\ref{S:flats}, we describe the flats and {\it orthoframes} of the matroid and their incidence.  Orthoframes are special bases of the matroid, and they are important for understanding a certain kind of duality between points and 15-point planes.  This point-plane correspondence is made explicit  in Propositions~\ref{P:orthodata}(4), \ref{P:planeeqns}, and \ref{P:ptplanedual}, where it is interpreted combinatorially, algebraically and geometrically, respectively.  Orthoframes also allow us to reconstruct the matroid - Proposition~\ref{P:orthorecon}.

Section~\ref{S:aut} is the heart of this paper, concentrating on the structure of the matroid automorphisms.  We show that the stabilizer of a point $x$ is $\stab(x)\cong S_5\times \Ints_2$ (Lemma~\ref{L:stab}), then use this to show that $\Aut(M(H_4))$ acts transitively on flats (Lemma~\ref{L:trans}) and primitively on the matroid (Theorem~\ref{T:prim}).  This allows us to understand the structure of the group - Theorem~\ref{T:aut}.  We conclude (Section~\ref{S:geo}) with a few connections between the flats of the matroid and various classes of faces of the 120- and 600-cell.  

We would like to thank Derek Smith and David Richter for useful discussions about the Coxeter/Weyl group $W$ for the $H_4$ root system.  The third author especially thanks   Prof. Thomas Brylawski for teaching him about matroids and the beauty of symmetry groups.
\section{Preliminaries}\label{S:def}
We assume some basic familiarity with matroids and root systems.  We refer the reader to the first chapter of  \cite{ox} for an introduction to matroids and \cite{gb,h} for much more on root systems.  The study of root systems is very important for Lie algebras, and the term `root' can be traced to characteristic roots of certain Lie operators.  For our purposes, the collection of roots forms a matroid, and the  Coxeter/Weyl  group of the root system is closely related to the automorphism group of that matroid.

%
%
%
%

The root system $H_4$ has an interpretation via two dual 4-dimensional regular polytopes, the 120-cell and the 600-cell.   The 120-cell is composed of 120 dodecahedra and the 600-cell is composed of 600 tetrahedra.  Each vertex of the 120-cell is incident to precisely 3 dodecahedra and each vertex of the 600-cell meets 5 tetrahedra, justifying the intuitive notion that the 120-cell is a 4-dimensional analogue of the dodecahedron while the 600-cell is a 4-dimensional version of the icosahedron.

As dual polytopes, the 120-cell and the 600-cell have the same set of hyperplane reflections and symmetry groups.  Then the connection between these two dual solids and the root system $H_4$ is direct:  The roots are precisely the normal vectors of all the reflecting hyperplanes that preserve the 120-cell (or, dually, the 600-cell).  A copy of this  root system also appears as the collection of 120 vertices of the 600-cell (where the 600-cell is positioned with the origin at its center and we identify a vertex with  the vector from the origin to that vertex).  Extensive information about these polytopes appears in Table 5 of the appendix of \cite{cox}.

\begin{Def}  The matroid $M(H_4)$ is defined to be the linear dependence matroid on the set  of 60 column vectors of the matrix $H$ over $\Rats[\tau]$, where $\tau=\frac{1+\sqrt{5}}{2}$ satisfies $\tau^2=\tau+1$.

$$H=\left[\begin{array}{cccccccccccc}
1 & 0 & 0 & 0 & 1 & 1 & 1 & 1 & 1 & 1 &1&1 \\
0 & 1 & 0 & 0 & 1 & 1 & 1 & 1 & -1 & -1&-1&-1 \\
0 & 0 & 1 & 0 & 1 & 1 & -1 & -1 & 1 & 1&-1&-1 \\
0 & 0 & 0 & 1 & 1 & -1 & 1 & -1 & 1 & -1&1&-1
\end{array}\right .\dots$$

$$\left  .\begin{array}{cccccccccccc}
0&0&0&0&0&0&0&0&0&0&0&0\\
 \tau &  \tau & \tau & \tau &\tau^2 & \tau^2 & \tau^2 &\tau^2 &1 &1&1 &1  \\
 \tau^2 & \tau^2 &-\tau^2 &-\tau^2 &  1 &1 &- 1&-1&\tau&\tau &-\tau &-\tau   \\
1 &-1&1&-1 &\tau&-\tau&\tau&-\tau& \tau^2&-\tau^2 &\tau^2 &- \tau^2 
\end{array}\right.\dots$$

$$\left . \begin{array}{cccccccccccc}
\tau & \tau & \tau & \tau & \tau^2 & \tau^2 &\tau^2&\tau^2 &1 &1 & 1 &1  \\
 0&  0 & 0 & 0 &0 & 0 & 0 &0 &0 &0 &0 &0  \\
1 & 1 &-1 &-1 & \tau& \tau &- \tau&-\tau &\tau^2&\tau^2 &-\tau^2  &-\tau^2    \\
\tau^2 &-\tau^2&\tau^2&-\tau^2 &1 &-1 &1 &-1 & \tau&-\tau&\tau&-\tau 
\end{array}\right.\dots$$

$$\left  . \begin{array}{cccccccccccc}
\tau& \tau  & \tau  & \tau  &\tau^2  & \tau^2  & \tau^2  & \tau^2  &1  & 1  & 1&1 \\ 
 \tau^2 & \tau^2 & -\tau^2 &-\tau^2 &1 &1&-1 &-1 & \tau&\tau&-\tau &-\tau \\ 
0& 0 &0 &0 & 0 &0 &0 &0 &0&0&0 &0   \\
1 &-1&1&-1 &\tau&-\tau&\tau&-\tau  &\tau^2&-\tau^2&\tau^2&-\tau^2 
\end{array}\right.\dots$$

$$\left .\begin{array}{cccccccccccc} 
\tau & \tau &\tau & \tau &\tau^2 & \tau^2 & \tau^2 &\tau^2 &1 &1 &1 &1  \\
1 & 1 &-1 &-1 & \tau& \tau &-\tau&-\tau &\tau^2&\tau^2 &-\tau^2  &-\tau^2 \\ 
\tau^2&-\tau^2&\tau^2&-\tau^2&1&-1&1&-1&\tau&-\tau&\tau&-\tau    \\
0&0&0&0&0&0&0&0&0&0&0&0 
\end{array}\right]$$

\end{Def}

The full root system $H_4$ consists of these 60 column vectors together with their 60 negatives.  Note that replacing any column vector by its negative does not change the matroid.  See Sec. 8.7 of \cite{cox} for more on the derivation of these coordinates.

Since $r(M(H_4))=4$, we can represent the matroid with an affine picture in $\mathbb{R}^3$.  We find affine coordinates  in $\mathbb{R}^3$ as follows:  First, find a non-singular linear transformation of the column vectors of $H_4$ that maps each vector to an ordered 4-tuple in which the first entry is non-zero, then project onto the plane $x_1=1$ and plot the remaining ordered triples.  We note that choosing different transformations gives rise to different projections;  choosing the `best' such projection is subjective.  In Figure \ref{F:alllines}, we give a projection of one such representation.

%

\section{The flats and orthoframes of $M(H_4)$} \label{S:flats}
We describe the rank-4 matroid $M(H_4)$ by determining the number of flats of each kind and the flat incidence structure.  This incidence structure will also be important for determining the automorphisms of the matroid.  We use  lower case letters to label the points of the matroid and upper case letters for flats of rank 2 or 3.

\subsection{Flats} Every line in $M(H_4)$ has 2, 3 or 5 points, and there are 4 different isomorphism classes of planes (rank-3 flats) in $M(H_4)$.  The planes are shown in Figure~\ref{F:3planes}.  This fact can be proven by a direct computation using the column dependences of the matrix $H$.

\begin{figure}[htbp]
\begin{center}
\includegraphics[width=4in]{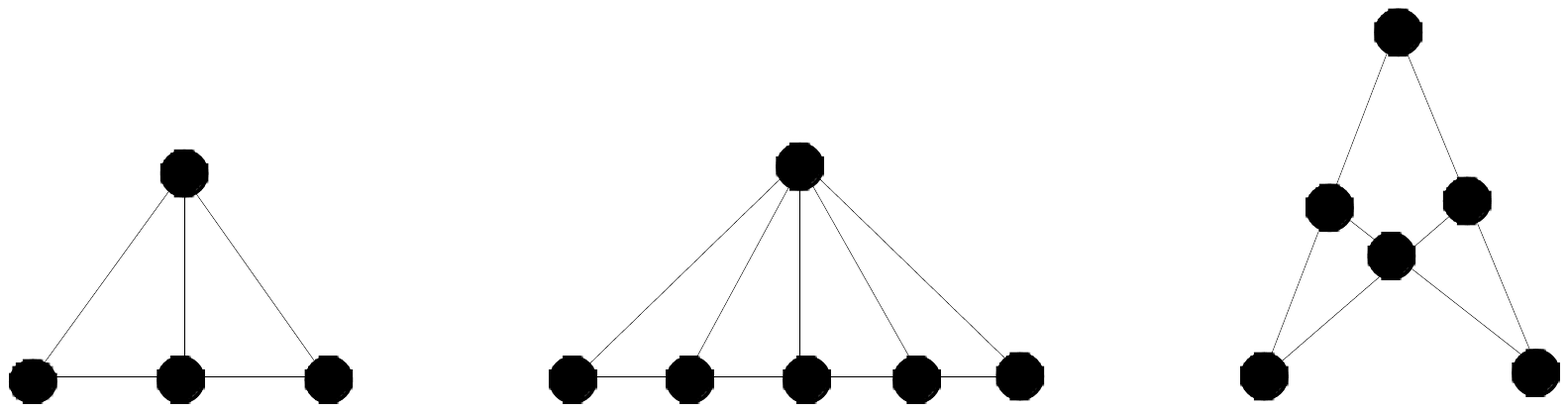}

$\Pi_3$\hspace{1.4in} $\Pi_5$\hspace{1.2in} $\Pi_6$

\bigskip

\includegraphics[width=6in]{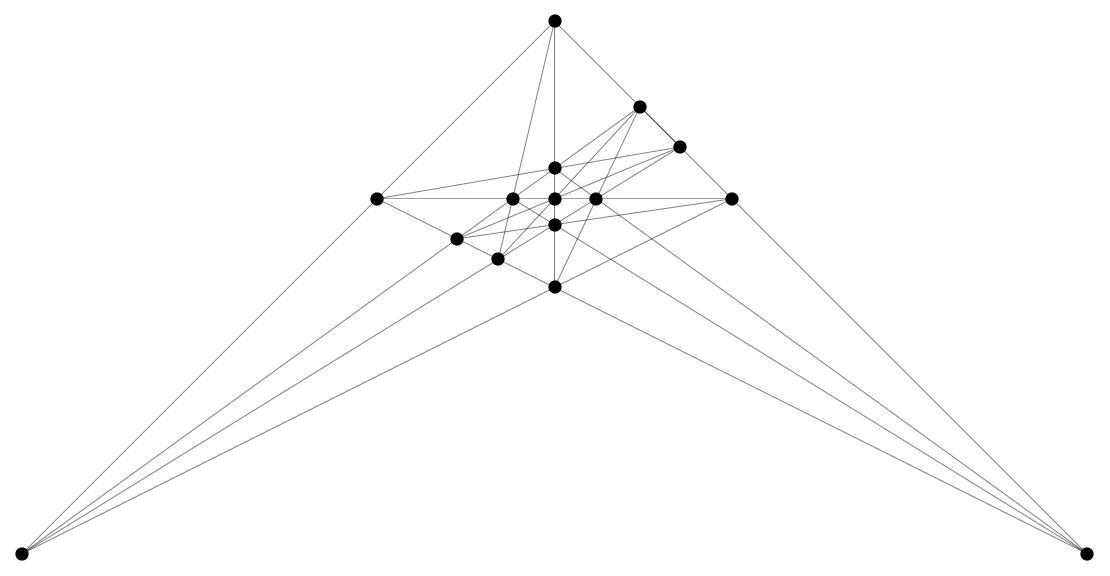}

$\Pi_{15}$
\caption{The four planes that appear in $M(H_4)$.}
\label{F:3planes}
\end{center}
\end{figure}

\begin{Lem}\label{L:flatcount}  Flat counts:  In Table \ref{T:flatcount}, we list the number of flats of rank 1, 2 and 3 in the matroid $M(H_4)$.
\end{Lem}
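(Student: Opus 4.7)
The plan is to verify the flat counts by direct enumeration from the matrix $H$, organized to exploit the symmetry of $W(H_4)$ acting on $M(H_4)$. The rank-$1$ entry is immediate: there are $60$ columns, hence $60$ points.

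For rank-$2$ flats, the paper has already asserted that every line has $2$, $3$, or $5$ points, so I would only need to count each type. The most efficient approach uses the transitivity of $W(H_4)$ on points: fix one column $v_0$ and, by computing which pairs $\{v_0,w\}$ are contained in a longer line, classify the lines through $v_0$ into the three types. If $v_0$ lies on $n_k$ lines of size $k$, then the total line count is $\ell_k=60\,n_k/k$. As a cross-check one has the double-counting identity
\[
\ell_2+3\ell_3+10\ell_5=\binom{60}{2}=1770.
\]

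For rank-$3$ flats, I would enumerate closures of independent triples of columns, group triples by common closure, and sort the resulting flats by size. Figure~\ref{F:3planes} asserts that the possibilities are exactly $\Pi_3$, $\Pi_5$, $\Pi_6$, $\Pi_{15}$; these are distinguished by their point count and by the multiset of line sizes contained in each. With the four types in hand, I would count each type by double-counting $(\text{point},\text{plane})$ incidences: tally the planes of each type through a single fixed point $v_0$, multiply by $60$, and divide by the number of points in a plane of that type. A parallel count of $(\text{line},\text{plane})$ incidences, using the rank-$2$ counts just obtained, provides an independent verification.

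The main obstacle is the computational bulk, especially recognizing $\Pi_{15}$ as a copy of $M(H_3)$ sitting inside $M(H_4)$ and ruling out any additional isomorphism types of rank-$3$ flats. The symmetry reductions above confine the work to the flats through one fixed point, but in practice this census is most cleanly completed by a computer-assisted verification from the explicit coordinates of $H$, after which the table entries follow by the double-counting identities above.
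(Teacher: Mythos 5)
Your proposal is correct and matches the paper's approach: the paper likewise relies on computer verification from the explicit coordinates, supplemented by exactly the kind of point--flat double-counting you describe (e.g., it derives the count of $600\cdot 30/6=300$ planes of type $\Pi_6$ from the incidences through a fixed point). Your cross-check $\ell_2+3\ell_3+10\ell_5=\binom{60}{2}=1770$ is consistent with the table values $450+600+720=1770$.
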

\begin{table}[htdp]
\caption{The number of flats of each kind in the matroid.}
\begin{center}
\begin{tabular}{|c||c||c|c|c||c|c|c|c|} \hline
Rank & Rank 1& \multicolumn{3}{c||}{Rank 2}  & \multicolumn{4}{c|}{Rank 3}  \\ \hline
Flat& Points & 2-pt lines & 3-pt lines & 5-pt lines  &
 $\Pi_3$ & $\Pi_5$ & $\Pi_6$ & $\Pi_{15}$  \\ \hline 
No. & 60 & 450 & 200 & 72  &
600 & 360 & 300 & 60 \\ \hline
\end{tabular}
\end{center}
\label{T:flatcount}
\end{table}

Diagrams of $M(H_4)$ that emphasize the 3-point lines and 5-point lines appear in Figure \ref{F:35lines}.

\begin{figure}[htbp]
\begin{center}
\includegraphics[width=2.45in]{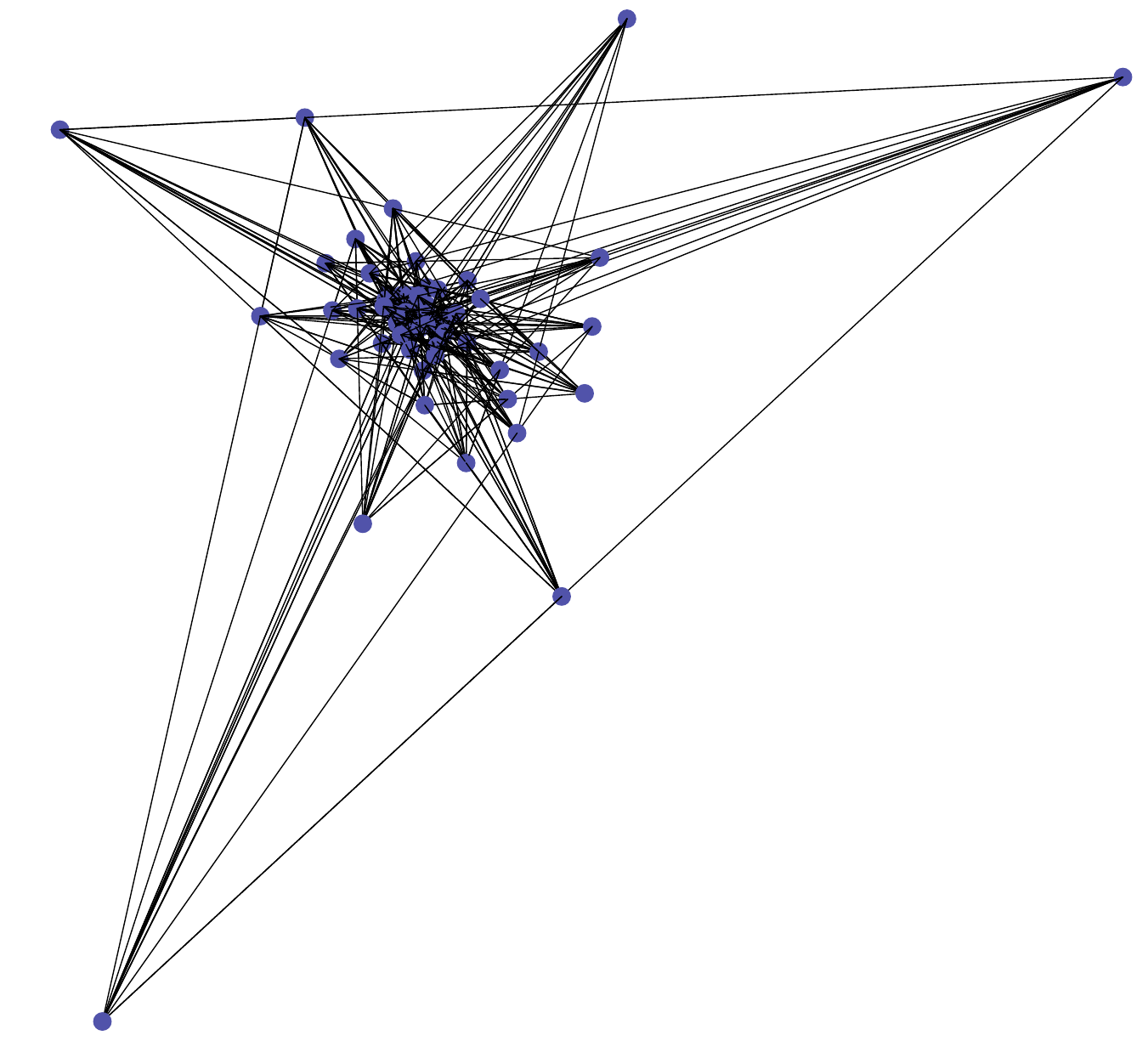}
\includegraphics[width=2.45in]{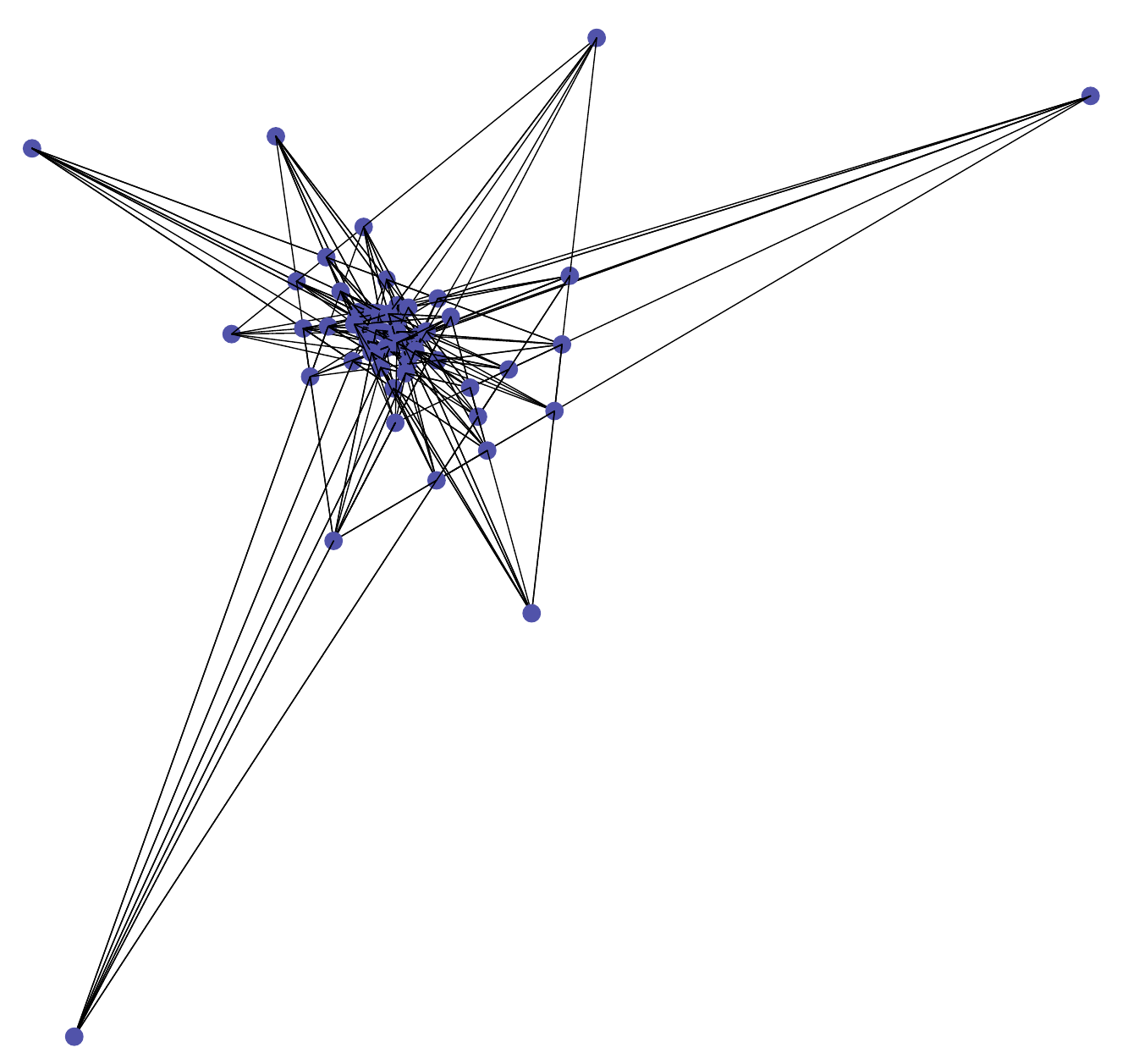}
\caption{The 3-point lines (left) and 5-point lines (right) of $M(H_4)$.}
\label{F:35lines}
\end{center}
\end{figure}

\begin{Lem}\label{L:flatinc} 
Flat incidence:  In Table \ref{T:flatincidence}, we list the number of flats of a certain kind that contain a given flat of lower rank.  
\end{Lem}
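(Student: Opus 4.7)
My plan is to fill in Table \ref{T:flatincidence} entry by entry using double counting, combined with the intrinsic line structure of each plane type shown in Figure \ref{F:3planes} and the flat counts already established in Lemma \ref{L:flatcount}.

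The first step is to record the internal line structure of each plane: how many 2-point, 3-point, and 5-point lines sit inside each $\Pi_3$, $\Pi_5$, $\Pi_6$, and $\Pi_{15}$. For $\Pi_3$ the answer is transparent (three 2-point lines and nothing longer), and for $\Pi_5$ and $\Pi_6$ the counts can be read directly from Figure \ref{F:3planes}. The $\Pi_{15}$ plane is isomorphic as a matroid to $M(H_3)$, whose line structure is described in \cite{eg}, so its internal line counts are already available. Because each plane type is specified up to matroid isomorphism, these internal numbers are constant across all planes of a given type.

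The second step is to use the Coxeter/Weyl group $W(H_4)$, which acts on the 60 pairs $\pm\mathbf{v}$ of roots and therefore permutes the points, lines, and planes of $M(H_4)$ while preserving line length and plane type. The standard fact that $W(H_4)$ acts transitively on roots gives transitivity on points; a corresponding orbit analysis on rank-2 and rank-3 flats yields transitivity on each line class and each plane class. With this transitivity in hand, the number $n_{S\to T}$ of rank-$j$ flats of type $T$ containing a fixed rank-$i$ flat of type $S$ is constant, and double counting of pairs $(F_S, F_T)$ with $F_S\subset F_T$ gives
$$n_{S\to T}\cdot \#\{\text{flats of type }S\} = \#\{\text{type-}S\text{ subflats of a type-}T\text{ flat}\}\cdot \#\{\text{flats of type }T\}.$$
Every quantity on the right is known, so $n_{S\to T}$ is determined. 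A single application of this identity fills every row of the table, with the point rows treating $F_S$ as a single point and the line-in-plane rows using the internal line counts from step one.

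The main obstacle is verifying $W(H_4)$-transitivity on each line and plane class; in principle a class could split into several Weyl orbits, which would require an orbit-by-orbit treatment rather than a uniform double count. To handle this directly, I would select a representative flat of each isomorphism type and compute its incident higher-rank flats by hand from the column dependences of the matrix $H$. Consistency between the direct count and the prediction of the double-counting identity then confirms both the transitivity claim and the table entry simultaneously; in particular, calculations within any $\Pi_{15}$ reduce to the already-settled $M(H_3)$ incidences of \cite{eg}.
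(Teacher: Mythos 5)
Your double-counting scheme is arithmetically sound and does reproduce every entry of Table~\ref{T:flatincidence}: given the totals in Lemma~\ref{L:flatcount} and the internal line structure of each plane type (three 2-point lines in a $\Pi_3$; five 2-point lines and one 5-point line in a $\Pi_5$; three 2-point and four 3-point lines in a $\Pi_6$; the $M(H_3)$ counts for a $\Pi_{15}$), the identity $n_{S\to T}\cdot\#S = (\text{internal count})\cdot\#T$ yields each table entry, including the apex-only entries for $\Pi_3$ and $\Pi_5$ once one counts apexes rather than all incident points. This is organized differently from the paper, which simply asserts that both lemmas can be verified by computer and then illustrates one interrelation (deriving the count of $\Pi_6$'s from the $45$ pairs of 3-point lines through a point, $15$ of which span $\Pi_{15}$'s), using the flat covering property as a further consistency check. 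Your version is more systematic and makes the dependence on Lemma~\ref{L:flatcount} and on the $M(H_3)$ structure explicit, which is a genuine gain in transparency.

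The soft spot is the homogeneity claim. The double count only determines the \emph{average} of $n_{S\to T}$ over flats of type $S$; to fill the table you must know this number is constant on each isomorphism class. You invoke transitivity of $W(H_4)$ on each line and plane class, but this is exactly the content of Lemma~\ref{L:trans}, which the paper proves \emph{later} and whose proof uses the incidence data of this very lemma --- so leaning on it here is circular, and transitivity of $W(H_4)$ on, say, the $200$ three-point lines or the $300$ six-point planes is not an off-the-shelf fact. Moreover, your proposed repair does not close the gap: computing the count for one representative and observing that it matches the average predicted by double counting does not rule out a class splitting into orbits with different counts that happen to average correctly. To make the argument airtight you must either prove the orbit statements independently (without Table~\ref{T:flatincidence}) or verify constancy flat-by-flat, which is in effect the computer check the paper falls back on.
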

\begin{table}[htdp]
\begin{center}
\begin{tabular}{|c||c|c|c||c|c|c|c|} \hline
 & \multicolumn{3}{c||}{Rank 2}  & \multicolumn{4}{c|}{Rank 3}  \\ \hline
 & 2-pt lines & 3-pt lines & 5-pt lines  &
$\Pi_3$ & $\Pi_5$ & $\Pi_6$ & $\Pi_{15}$  \\ \hline 
A point is in & 15 & 10 & 6  & $10^{(a)}$& $6^{(a)}$ & 30 & 15 \\ \hline
A 2-pt line is in& 1 & - & - &4 &4& 2 & 2 \\ \hline
A 3-pt line is in & -&1&-  &3 &- & 6 & 3 \\ \hline
A 5-pt line is in& -& -&1 &- &5 & - & 5 \\ \hline
\end{tabular}
\end{center}

\smallskip
\caption{The number of flats of one kind that contain a given flat of another kind. (a)  The point is the apex of the $\Pi_3$ or $\Pi_5$.}
\label{T:flatincidence}
\end{table}

Both lemmas can be verified by  computer calculations, but we give an example of how the various counts are interrelated. Assuming the point-flat incidence counts for 3-point lines and $\Pi_{15}$ planes, we will count  the number of $\Pi_6$ planes; other counts may be obtained with similar arguments.  

For a given point $x \in M(H_4)$, there are ten 3-point lines through $x$, giving $45$ pairs of 3-point lines containing $x$.  Now $x$ is in 15 $\Pi_{15}$ planes, and each of these planes is completely determined by the pair of 3-point lines containing $x$.  Each of the remaining 30 pairs of 3-point lines containing $x$ uniquely determine a $\Pi_6$ containing $x$.  Thus, there are 30 $\Pi_6$ planes containing a given point.

To get the total number of $\Pi_6$ planes, consider the point-$\Pi_6$ incidence.  Each point is in 30 $\Pi_6$ planes, and each $\Pi_6$ contains 6 points.  Thus, the total number of $\Pi_6$ planes is $300$.

The  flats of a matroid satisfy the {\it flat covering property}:

\begin{quotation}
  If  $F$ is a  flat in a matroid $M$, then $\{F'-F \mid F' \mbox{ is a flat that covers } F\}$ partitions $E-F$.
\end{quotation}

We illustrate this partitioning property for  $M(H_4)$:

\begin{description}
\item[Point/line incidence]   From Table~\ref{T:flatincidence}, we know a given point $x$ is covered by precisely 15 2-point lines, ten 3-point lines and six 5-point lines.  Then it is easy to see this pencil of lines contains precisely 59 points (not counting $x$), partitioning $E-x$, as required.
\item [Line/plane incidence]  We consider the three kinds of lines in $M(H_4)$.
\begin{itemize}
\item 2-point lines:  Each 2-point line $L$ is covered by four $\Pi_3$'s, four $\Pi_5$'s, two $\Pi_6$'s and two $\Pi_{15}$'s.  Each $\Pi_3$ that covers $L$ contains two points not on $L$.  Similarly, each $\Pi_5$ covering  $L$ has four more points, each such $\Pi_6$ also has four more points, and each such  $\Pi_{15}$ has 13 points.  This gives us the required partition of the remaining 58 points.
\item 3-point lines:  Each 3-point line $L$ is in 3 $\Pi_3$'s, 6 $\Pi_6$'s and 3 $\Pi_{15}$'s.  As above, counting the points in these covering planes gives a total of 57 points partitioned by these planes.
\item 5-point lines:   If $L$ is a 5-point line, then only two kinds of planes contain $L$: the 5 $\Pi_5$'s and the 5 $\Pi_{15}$'s.  These 10 planes contain 55 points (excluding the points on $L$), again giving us the required partition of $E-L$.
\end{itemize}

\end{description}

As an application of the incidence data given above, we prove the following.

\begin{Prop}\label{P:planeintersect} Every pair of $\Pi_{15}$ planes intersect.
\end{Prop}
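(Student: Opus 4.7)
The plan is to prove the proposition by a double counting argument on pairs of $\Pi_{15}$ planes. First, I would observe that the total number of unordered pairs of $\Pi_{15}$ planes is $\binom{60}{2} = 1770$. Then I would separately count the number of pairs that share a common line; if this count also equals 1770, every pair must share a line and in particular intersect.

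To count pairs sharing a line, I would sum over all lines of the matroid. From Table~\ref{T:flatincidence}, each 2-point line lies in 2 planes of type $\Pi_{15}$, each 3-point line in 3, and each 5-point line in 5, so each such line contributes $\binom{2}{2}$, $\binom{3}{2}$, and $\binom{5}{2}$ pairs, respectively. Combining this with the line counts in Table~\ref{T:flatcount} (450, 200, 72) gives a short arithmetic check: $450 \cdot 1 + 200 \cdot 3 + 72 \cdot 10 = 1770$, matching $\binom{60}{2}$ on the nose.

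The one subtlety is ruling out overcounting: could two \emph{different} lines give rise to the same pair $(P_1, P_2)$ of $\Pi_{15}$ planes? If so, $P_1 \cap P_2$ would contain two distinct lines. But two distinct lines in a matroid span a rank-3 flat, and since $P_1, P_2$ themselves have rank 3, this would force $P_1 = P_2$, a contradiction. So the count above counts each pair of planes at most once (in fact, exactly once for every pair that shares a line).

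Putting these together, the number of pairs of $\Pi_{15}$ planes sharing a common line is exactly $\binom{60}{2}$, which is all of them. This gives the proposition and, as a bonus, the stronger statement that every pair of $\Pi_{15}$ planes shares a unique line. I do not expect a real obstacle here — the proof is essentially bookkeeping once the flat incidence tables are in hand, and the only conceptual step is the ``no overcounting'' observation in the previous paragraph.
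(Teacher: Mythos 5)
Your proof is correct and is essentially the same counting argument as the paper's: the paper fixes one $\Pi_{15}$ plane $P$ and uses the same incidence data to show that exactly $6\cdot 4+10\cdot 2+15\cdot 1=59$ other $\Pi_{15}$'s meet $P$ in a line, which is all of them, whereas you sum the equivalent count over all $\binom{60}{2}=1770$ pairs at once. Your explicit ``no overcounting'' step (two distinct lines common to $P_1$ and $P_2$ would force $P_1=P_2$) is a point the paper leaves implicit, and it is needed in both versions.
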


\begin{proof}
Let $P$ be a 15-point plane and let $L_5$ be a 5-point line contained in $P$.  Since every 5-point line is contained in precisely five $\Pi_{15}$'s, there are four $\Pi_{15}$'s that meet $P$ along the line $L_5$.  Since $P$ contains six 5-point lines, this gives a total of 24 $\Pi_{15}$'s that meet our given plane $P$ in a 5-point line.

We repeat this argument for 3-point lines:  Each of the ten 3-point lines in $P$ is contained in two more $\Pi_{15}$'s, accounting for another 20 $\Pi_{15}$'s meeting $P$.

Finally, each two-point line is in two $\Pi_{15}$'s, but there are 15 2-point lines in $P$.  This gives another 15 $\Pi_{15}$'s planes that meet $P$ in a 2-point line.  But this now accounts for 59 $\Pi_{15}$'s, all of which meet $P$ in either a 2, 3 or 5-point line.  Thus, every pair of $\Pi_{15}$'s meet.

  \end{proof}
  
  In fact, all of these intersections are {\it modular}:  $r(P_1 \cap P_2)=2$ for all pairs of 15-point planes $P_1$ and $P_2$.  We also remark the 15-point planes are isomorphic (as matroids) to $M(H_3)$ - the matroid associated to the root system $H_3$ (see \cite{eg}).  We will need this connection in Section~\ref{S:aut}.

\subsection{Orthoframes}
Of special interest is the interesting symmetry between points and $\Pi_{15}$ planes:  There are 60 points and 60 $\Pi_{15}$'s, where each point is in 15 $\Pi_{15}$'s and each $\Pi_{15}$ has 15 points.  The easiest way to understand this symmetry is through {\it orthoframes}.

\begin{Def}\label{D:ortho}
A basis $B$ for $M(H_4)$ is an {\it orthoframe} if each pair of points in $B$ forms a 2-point line in the matroid.
\end{Def}
For instance, the basis formed by the first 4 columns of the matrix $H$ is an  orthoframe.   In general, these bases correspond to column vectors in $H$ that are pairwise orthogonal.  Two more orthoframes are:

$$\left[\begin{array}{cccc}
0& 1 & \tau & \tau^2 \\
1 & 0 & \tau^2 & -\tau  \\
-\tau & \tau^2 & 0 & -1 \\
-\tau^2 & -\tau& 1 & 0
\end{array}\right] \hspace{.5in}
\left[\begin{array}{cccc}
1& 0 & 1 & \tau^2 \\
1 & \tau & \tau & -\tau  \\
1 & -\tau^2 & 0 & -1 \\
1 & 1& -\tau^2 & 0
\end{array}\right]
$$

Orthoframes are important to us for two reasons:   matroid automorphisms give  group actions on the set of orthoframes, and orthoframes have an immediate geometric interpretation  in the root system $H_4$.  

We state (without proof) several useful facts we will need about orthoframes.  The proofs are routine, and follow in a similar way the incidence counts of Lemmas~\ref{L:flatcount} and \ref{L:flatinc}.

\begin{Prop}\label{P:orthodata}
\begin{enumerate}
\item $B$ is an orthoframe if and only if  the four column vectors corresponding to the points of $B$   are pariwise orthogonal.
\item There are 75 orthoframes.
\item Each point is in 5 orthoframes, and each 2-point line is in exactly one orthoframe.  
\item If $O_1, O_2, \dots, O_5$ are the 5 orthoframes that contain a given point $x$, then $\displaystyle{\bigcup_{i=1}^5 O_i -x}$ is a 15-point plane.
\end{enumerate}

\end{Prop}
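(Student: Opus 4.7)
The plan is to handle the four parts in order, using part (1) to bridge the matroid and the Euclidean geometry of $H_4$, and then deriving the rest via incidence counts in the spirit of Lemmas~\ref{L:flatcount} and \ref{L:flatinc}. For (1), I would invoke the classification of rank-$2$ sub-root-systems of $H_4$: any two columns $v_1, v_2$ of $H$ span a $2$-plane in which the columns of $H$ together with their negatives form a rank-$2$ sub-root-system, and since the angles between roots of $H_4$ lie in $\{\pi/5, 2\pi/5, \pi/2, 3\pi/5, 4\pi/5\}$, the only options are $A_1 \times A_1$ (two orthogonal matroid points), $A_2$ (three points at $60^\circ$), and $H_2 = I_2(5)$ (five points at $36^\circ$), corresponding respectively to $2$-, $3$-, and $5$-point lines of $M(H_4)$. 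Hence $\{v_1, v_2\}$ is a $2$-point line iff $v_1 \perp v_2$, so $B$ is an orthoframe iff its four vectors are pairwise orthogonal.

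For (2) and (3), the essential auxiliary input is that each $2$-point line extends to \emph{exactly one} orthoframe. Given an orthogonal pair $\{v_1, v_2\}$, its $2$-dimensional orthogonal complement $V$ meets $H_4$ in a rank-$2$ sub-root-system; I would check (either by direct computation, or by using the transitive $W(H_4)$-action on ordered orthogonal pairs to reduce to a single case) that this intersection is always $A_1 \times A_1$, whose two matroid points are the unique $\{v_3, v_4\}$ completing the orthoframe. Double counting (orthoframe, $2$-point line) incidences then gives $6 \cdot \#\{\text{orthoframes}\} = 1 \cdot 450$, so there are $75$ orthoframes, proving (2); and (orthoframe, point) incidences give $4 \cdot 75 = k \cdot 60$, yielding $k = 5$ orthoframes per point, proving (3).

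For (4), let $O_1, \dots, O_5$ be the orthoframes through $x$ and $P = \bigcup_{i=1}^{5} O_i - \{x\}$. Each $O_i$ contributes three vectors in $x^\perp$, so $|P| \le 15$; and if $y \in O_i \cap O_j$ with $i \ne j$, then the $2$-point line $\{x, y\}$ lies in two distinct orthoframes, contradicting the uniqueness from (3), so $|P| = 15$. Conversely, every column of $H$ orthogonal to $x$ forms a $2$-point line with $x$ by (1) and therefore lies in the unique orthoframe extending that line, hence in $P$. Thus $P$ is exactly the set of matroid points in the $3$-dimensional subspace $x^\perp$, and is in particular closed in the matroid (a flat). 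It contains the three pairwise orthogonal vectors of any $O_i - \{x\}$, so has rank $3$; a rank-$3$ flat with $15$ points must, by the classification of planes in Section~\ref{S:flats}, be a $\Pi_{15}$.

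The main obstacle is the uniqueness claim underlying (2)–(4): that the orthogonal complement of every $2$-point line meets $H_4$ in a second $A_1 \times A_1$ rather than in an $A_2$ or $H_2$ subsystem. While structurally plausible from the rich symmetry of $H_4$, a clean proof either invokes the transitivity of $W(H_4)$ on ordered orthogonal pairs to reduce the question to a single explicit case, or relies on the direct enumeration across the $450$ lines of the matroid that the authors indicate is routine.
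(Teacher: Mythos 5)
Your argument is correct, but be aware that the paper offers no proof of this proposition at all: the authors state it ``without proof,'' saying only that the verification is routine and parallels the incidence counts of Lemmas~\ref{L:flatcount} and~\ref{L:flatinc}, which are themselves attributed to computer calculation. What you supply is therefore more conceptual than anything in the paper. Your reduction of parts (2)--(4) to the single claim that the orthogonal complement of an orthogonal pair of roots contains exactly one further pair of matroid points is the right pivot, and both of your proposed justifications work: the direct check is immediate for the pair $e_1,e_2$ (the only columns of $H$ whose first two coordinates both vanish are $e_3$ and $e_4$), and transitivity of $W(H_4)$ on ordered orthogonal pairs follows from its transitivity on roots (cited from Coxeter in the proof of Lemma~\ref{L:trans}) together with transitivity of $W(H_3)$ on the roots orthogonal to a fixed root. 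Two small points to tighten. First, in part (1) the converse direction (a $2$-point line forces orthogonality) deserves an explicit line: the planar section of the root system is closed under the reflections $s_{v_1},s_{v_2}$, and since $s_{v_1}(v_2)$ lies in the plane and cannot be $\pm v_1$, a $2$-point line forces $s_{v_1}(v_2)=v_2$, i.e.\ $v_1\perp v_2$; your angle classification is what makes this work, but it applies only because the section is reflection-closed. Second, the conclusion in (3) that \emph{each} point lies in exactly $5$ orthoframes should not rest on the global average $4\cdot 75/60$ alone, since that only bounds the mean; either invoke the transitivity on roots you already use, or count locally: $x$ lies on $15$ two-point lines (Lemma~\ref{L:flatinc}), each contained in a unique orthoframe through $x$, and each orthoframe through $x$ accounts for exactly $3$ of them, giving $15/3=5$. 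With those touches your proof is complete and self-contained, which is more than the paper provides.
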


Part (4) of this proposition allows us to define a bijection between the points of the matroid and the 15-point planes: Given a point $x$, let $O_1, O_2, \dots, O_5$ be the five orthoframes that contain $x$.  Then define $P_x:=\displaystyle{\bigcup_{i=1}^5 O_i -x}$. Conversely, a given 15-point plane can be partitioned into five partial orthoframes (this partition is visible in the picture of a $\Pi_{15}$ in Fig.~\ref{F:3planes} -- see also Sec. 2.1 of \cite{eg}).  Then a 15-point plane $P_x$ uniquely determines a point $x$ that ``completes'' each of these orthoframes.

We will use this correspondence frequently; we introduce some terminology suggestive of the relationship between the column vectors corresponding to the point and the plane.
\begin{Def}\label{D:names}
Suppose the point $x$ corresponds to the 15-point plane $P_x$ as above.  Then we say the point $x$ is the {\it orthopoint} of the plane $P_x$ and the 15-point plane $P_x$ is the {\it orthoplane} of the point $x$.
\end{Def}

We can also use the orthoframes to uniquely reconstruct the matroid $M(H_4)$.   

\begin{Prop}\label{P:orthorecon}  The collection of 75 orthoframes completely determines all the flats of the matroid $M(H_4)$.

\end{Prop}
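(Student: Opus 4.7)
The plan is to reconstruct every flat of $M(H_4)$ from the 75 orthoframes by building up in order of rank and exploiting the orthopoint/orthoplane duality. The ground set $E$ is recovered first as the union of all orthoframes. The 2-point lines then come immediately: by Definition~\ref{D:ortho} every pair of points in an orthoframe spans a 2-point line, and by Proposition~\ref{P:orthodata}(3) each 2-point line lies in a unique orthoframe, so the 2-point lines are exactly the 2-element subsets of orthoframes.

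Next I would reconstruct all sixty $\Pi_{15}$-planes using the orthoplane construction of Proposition~\ref{P:orthodata}(4): for each $x \in E$, gather the five orthoframes containing $x$ (guaranteed by Proposition~\ref{P:orthodata}(3)) and form $P_x = \bigcup_{i=1}^{5} O_i - x$. This produces all 60 of the $\Pi_{15}$-planes directly from orthoframe data, with no additional input.

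With the 2-point lines and the $\Pi_{15}$-planes in hand, the 3-point and 5-point lines can be recovered as pairwise intersections of $\Pi_{15}$-planes. Proposition~\ref{P:planeintersect}, together with the modularity remark following its proof, ensures that any two distinct $\Pi_{15}$-planes meet in a rank-2 flat, i.e.\ a line; conversely, Table~\ref{T:flatincidence} shows every line of the matroid lies in at least two $\Pi_{15}$-planes, so every line appears as such an intersection. The intersections of cardinality 3 and 5 then give the 3-point and 5-point lines. Finally, once $E$ and all the lines are known, the matroid's closure operator is fully determined (closure of $S$ is obtained by iteratively adjoining every point lying on a line spanned by two points already in $S$), and taking closures of independent triples yields the remaining rank-3 flats $\Pi_3, \Pi_5, \Pi_6$; the rank-4 flat is $E$ itself.

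The main obstacle is the line-recovery step: one must verify both that every pairwise $\Pi_{15}$-intersection is exactly a line (not a larger flat) and that every line of $M(H_4)$ arises as such an intersection. The first statement is handled by the modularity remark following Proposition~\ref{P:planeintersect}; the second is read off Table~\ref{T:flatincidence}. Everything else is essentially bookkeeping with the closure operator.
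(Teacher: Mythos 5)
Your reconstruction of the ground set, the 2-point lines, the $\Pi_{15}$'s, and the 3- and 5-point lines matches the paper's proof step for step. The gap is in your final step. The claim that ``once $E$ and all the lines are known, the matroid's closure operator is fully determined'' by iteratively adjoining points on lines spanned by pairs is false as a general principle (the lines of a rank-4 matroid do not determine its planes: compare $U_{4,n}$ with a generic configuration having a single coplanar quadruple), and it fails concretely inside $M(H_4)$ itself. Take three points $\{a,b,c\}$ of an orthoframe $\{a,b,c,d\}$. This triple is independent, every pair spans a 2-point line, so your iterative procedure terminates immediately at $\{a,b,c\}$; but the true closure is the 15-point plane $P_d$. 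So ``taking closures of independent triples'' via line-adjunction does not compute closures, and applied blindly it would output $\{a,b,c\}$ as a spurious 3-point plane.

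One can patch this: first discard every independent triple lying in an already-constructed $\Pi_{15}$, and then check that for the surviving triples (those spanning a $\Pi_3$, $\Pi_5$, or $\Pi_6$) the line-adjunction process does terminate at the full plane. That verification is exactly where the content lies, and it is what the paper supplies in place of your appeal to the closure operator: the $\Pi_3$'s and $\Pi_5$'s are cones over a 3- or 5-point line of a $\Pi_{15}$ with apex the orthopoint (so the base line propagates and the apex is adjoined), and each $\Pi_6$ is recovered from its four pairwise-intersecting 3-point lines, the pairs generating a $\Pi_6$ being identified as those intersecting pairs of 3-point lines not contained in any known $\Pi_{15}$. Without some such case analysis your last paragraph is not ``bookkeeping''; it rests on an incorrect lemma.
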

\begin{proof}   We show how the orthoframe data allows us to reconstruct all the flats.
\begin{itemize}
\item {\bf $\Pi_{15}$ planes:}  The union of the orthoframes containing a given point $x$ form the 15-point orthoplane $P_x$ (where the common point is removed), so we can construct all the $\Pi_{15}$'s this way.
\item {\bf Lines:}  Since each 2-point line is in a unique orthoframe, we simply list the six 2-point lines contained in each of the 75 orthoframes, giving us the 450 2-point lines.   By the proof of Prop.  \ref{P:planeintersect}, every 3-point line and every 5-point line occurs as the intersection of some pair of $\Pi_{15}$'s.  This allows us to reconstruct all rank-2 flats. 
\item {\bf $\Pi_3$ and $\Pi_5$ planes:} For the  trivial planes $\Pi_3$ and $\Pi_5$, each such plane arises as the union of  a 3 or 5-point line in a $\Pi_{15}$ with the plane's orthopoint as the apex of the $\Pi_3$ or $\Pi_5$.
\item {\bf $\Pi_6$ planes:} The remaining non-trivial flats are the 300  $\Pi_6$ planes.  We consider all pairs of intersecting 3-point lines.  Each intersecting pair determines either a $\Pi_{15}$ or a $\Pi_6$.  We know all the 15-point planes at this point, so we can determine all pairs giving a $\Pi_6$.  To reconstruct each $\Pi_6$ from this information, note that each $\Pi_6$ contains four 3-point lines, every pair of which intersect.  This allows us to uniquely determine each $\Pi_6$ from the collection of 3-point lines.
\end{itemize}

\end{proof}

Compared with bases, orthoframes provide a much more efficient way to describe the matroid.  While there are 75 orthoframes, a computer search gives 398,475 bases; a random subset of four columns has approx 81.7\% chance of being a basis.  


We conclude this section by noting  an algebraic explanation for the point-orthoplane correspondence.  Each point corresponds to an ordered 4-tuple $[a,b,c,d]$, and  each $\Pi_{15}$ corresponds to the solution set of a linear equation.  The connection between the coordinates of the point $z$ and the corresponding linear equation the associated orthoplane $P_z$ satisfies is simple.

\begin{Prop}\label{P:planeeqns}
Let $z$ be a point with corresponding orthoplane $P_z$, and suppose $z$ corresponds to the ordered 4-tuple $[a,b,c,d]$.  Then $P_z$ is defined  by the linear equation $ax_1+bx_2+cx_3+dx_4=0$.
\end{Prop}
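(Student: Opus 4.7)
The plan is to read off the defining equation of $P_z$ directly from the orthogonality structure of the orthoframes that make up $P_z$. Recall the construction $P_z = \bigcup_{i=1}^5 O_i - \{z\}$, where $O_1,\dots,O_5$ are the five orthoframes containing $z$, and Proposition~\ref{P:orthodata}(1) says that points in a common orthoframe correspond to pairwise orthogonal column vectors of $H$.

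The first step is essentially a one-line observation: every one of the fifteen points $y \in P_z$ shares an orthoframe with $z$, so the column vector of $y$ is orthogonal to the column vector $z = [a,b,c,d]$. Writing $y$ as $[x_1,x_2,x_3,x_4]$, this orthogonality is precisely the linear relation $ax_1+bx_2+cx_3+dx_4=0$, so every point of $P_z$ satisfies the claimed equation.

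To upgrade this from "every point of $P_z$ satisfies the equation" to "$P_z$ is defined by the equation", I would use dimension plus the flat-closure of $P_z$. Since $P_z$ is a rank-$3$ flat, the linear span of its column vectors is a $3$-dimensional subspace of $\mathbb{R}^4$, and by the previous step this span is contained in the $3$-dimensional hyperplane cut out by $ax_1+bx_2+cx_3+dx_4=0$; hence the two subspaces coincide. Because $P_z$ is a flat of the linear matroid, any column vector of $H$ lying in its span already belongs to $P_z$, so a column of $H$ satisfies the equation if and only if the corresponding point lies in $P_z$. There is no substantive obstacle here; the only subtle point is carefully separating the easy inclusion (immediate from orthoframe orthogonality) from the reverse inclusion (which requires the flat-closure property to turn the hyperplane into $P_z$ exactly).
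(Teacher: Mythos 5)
Your proof is correct and follows essentially the same route as the paper: both read the defining equation off the orthogonality of the column vectors of $z$ and of any point sharing an orthoframe with $z$ (Proposition~\ref{P:orthodata}(1)). The paper in fact stops after the forward inclusion (every point of $P_z$ satisfies the equation), so your additional dimension-count and flat-closure argument for the reverse inclusion is a welcome bit of extra care that the paper leaves implicit.
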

\begin{proof}
Let $z$ be a point and let $O_1, O_2, \dots, O_5$ be the five orthoframes containing $z$.  Then if  $\displaystyle{y \in \bigcup_{i=1}^5 O_i -z =P_z}$, we have the 4-tuples corresponding to the points $y$ and $z$ are orthogonal (by Prop.~\ref{P:orthodata}(1)).  Thus,  if the coordinates for $z$ are $[a,b,c,d]$, we have  $ax_1+bx_2+cx_3+dx_4=0$ for all column vectors $[x_1,x_2,x_3,x_4] \in P_z$.

\end{proof}
As an example of this algebraic connection, let $z$ be the point with coordinates $[\tau^2,0,\tau,-1]$.  Then the equation $\tau^2 x_1+\tau x_3-x_4=0$ is satisfied by $P_z$:

$$\left(
\begin{array}{ccccccccccccccc}
 0 & 1 & 1 & 0 & 0 & 0 & 0 & \tau & 1 & 1 & 1 & \tau & \tau & 1 & 1 \\
 1 & 1 & -1 & \tau^2 & \tau^2 & 1 & 1 & 0 & 0 & \tau & -\tau & 1 & -1 & \tau^2 & -\tau^2 \\
 0 & -1 & -1 & 1 & -1 & \tau & -\tau & -1 & -\tau^2 & 0 & 0 & -\tau^2 & -\tau^2 & -\tau & -\tau \\
 0 & 1 & 1 & \tau & -\tau & \tau^2 & -\tau^2 & \tau^2 & -\tau & \tau^2 & \tau^2 & 0 & 0 & 0 & 0
\end{array}
\right).$$

\section{Automorphisms}\label{S:aut}
We turn to our main topic: the structure of the automorphisms of $M(H_4)$.  
  For a group $G$ acting on a set $X$ with $x \in X$, recall the {\it stabilizer of } $x$ 
$$\stab(x) = \{g \in G \mid g(x)=x\}.$$
\begin{Lem}\label{L:stab}
Let $x$ be a point of $M(H_4)$ and $P_x$ its 15-point orthoplane.  Then $\stab(x)=\stab(P_x)\cong S_5 \times \Ints_2$.
\end{Lem}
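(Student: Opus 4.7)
My plan is to exploit the canonical orthopoint--orthoplane bijection from Definition~\ref{D:names} to relate $\stab(x)$ to the automorphism group of the 15-point plane $P_x$. The first equality $\stab(x) = \stab(P_x)$ is immediate: any automorphism $\sigma$ of $M(H_4)$ fixing $x$ permutes the five orthoframes through $x$ and therefore preserves $P_x = \bigcup_{i=1}^{5} O_i - x$ as a set, while conversely any automorphism fixing $P_x$ fixes its unique orthopoint $x$.

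The next step is to introduce the restriction homomorphism
$$\rho \colon \stab(P_x) \longrightarrow \Aut(P_x),$$
sending $\sigma$ to its action on the submatroid $P_x$. Since $P_x \cong M(H_3)$ (as noted after Proposition~\ref{P:planeintersect}) and $\Aut(M(H_3)) \cong S_5 \times \Ints_2$ was computed in~\cite{eg}, the lemma reduces to proving that $\rho$ is an isomorphism.

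For surjectivity, I would lift every automorphism of $M(H_3)$ to an automorphism of $M(H_4)$ fixing $x$. The geometric half is supplied by Coxeter considerations: the $W(H_4)$-stabilizer of a chosen root contains a copy of $W(H_3)$. To lift the non-geometric generator of $\Aut(M(H_3))$ I would use the partition of $P_x$ into the five partial orthoframes coming from the orthoframes through $x$ (Proposition~\ref{P:orthodata}(4)), combined with the orthoframe reconstruction of Proposition~\ref{P:orthorecon} to pin down the extension on the remaining 44 points.

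For injectivity, suppose $\sigma$ fixes $x$ together with every point of $P_x$. Then each of the five orthoframes through $x$ is fixed pointwise, and for every $a \in P_x$ the orthoplane $P_a$ is fixed setwise (since $\sigma(P_a) = P_{\sigma(a)} = P_a$), with $x$ and the whole line $P_a \cap P_x$ fixed pointwise inside $P_a$. The plan is to iterate this rigidity across the 15 orthoplanes $\{P_a : a \in P_x\}$---each itself a copy of $M(H_3)$ with substantial fixed substructure---to force $\sigma$ to be the identity on all 60 points. The main obstacle I anticipate is ruling out an involution that swaps the two non-apex points on some 3-point line through $x$; I would dispatch this by chasing incidences through several overlapping $\Pi_3$ or $\Pi_6$ planes containing such a line, or equivalently by invoking Proposition~\ref{P:orthorecon} to show that fixing the five orthoframes through $x$ together with all of $P_x$ pointwise propagates to force all 75 orthoframes to be fixed pointwise.
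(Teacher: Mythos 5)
Your opening move is fine and matches the paper: $\stab(x)=\stab(P_x)$ follows from the orthopoint--orthoplane bijection, and studying the restriction homomorphism $\rho\colon\stab(P_x)\to\Aut(P_x)$ is the right framework. But the proposal then rests on a false citation that derails everything after it: Theorem 3.3 of \cite{eg} gives $\Aut(M(H_3))\cong S_5$, \emph{not} $S_5\times\Ints_2$. Since $P_x\cong M(H_3)$, the target of $\rho$ has order $120$, while the group you are trying to compute has order $240$, so $\rho$ cannot possibly be an isomorphism. The $\Ints_2$ factor does not live inside $\Aut(P_x)$ at all --- it is the \emph{kernel} of $\rho$.

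Concretely, your injectivity step is doomed, and you half-noticed why: the ``involution that swaps the two non-apex points on some 3-point line through $x$'' that you hope to rule out cannot be ruled out, because it exists. The geometric reflection $r_x$ through the hyperplane spanned by $P_x$ fixes $x$ and fixes $P_x$ pointwise, yet is a nontrivial matroid automorphism: it swaps the two non-apex points on each of the ten $3$-point lines through $x$ and moves the remaining $24$ points in $12$ transpositions. The paper's proof is devoted precisely to showing this is the \emph{only} nonidentity element fixing $\{x\}\cup P_x$ pointwise: writing the ten $3$-point lines through $x$ as $L_i=\{x,y_i,z_i\}$, one first shows $\sigma(y_i)\in\{y_i,z_i\}$ for each $i$ (using the three $\Pi_3$'s over a fixed $3$-point line of $P_x$), and then a connectivity argument on the graph whose vertices are the $L_i$, with edges given by pairs spanning a $\Pi_6$, forces the all-fixed/all-swapped dichotomy; a separate argument using the $\Pi_{15}$'s through $x$ handles the six $5$-point lines. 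The correct structure is therefore a central extension of $S_5$ by $\ker\rho=\langle r_x\rangle\cong\Ints_2$, which splits as $S_5\times\Ints_2$ because $r_x$ commutes with the plane automorphisms. Your surjectivity sketch (lifting $A_5$ geometrically and the odd permutations via the orthoframe partition) is serviceable, but you must replace ``$\rho$ is injective'' by the computation of its kernel and supply the dichotomy argument above.
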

\begin{proof}
The point-orthoplane correspondence (Prop.~\ref{P:orthodata}(4) or Prop.~\ref{P:planeeqns}) gives $\stab(x)= \stab(P_x)$.  Note that $P_x  \cong M(H_3)$, the matroid associated with the icosahedral root system.  Then, by  Theorem 3.3 of \cite{eg}, $\Aut(M(H_3)) \cong S_5$, so $S_5$ fixes the plane $P_x$.  ($S_5$ acts on the five rank-3 orthoframes.)  Thus $S_5 \leq \stab(x)$.

We may now suppose $\sigma \in  \stab(P_x)$ where $\sigma$ fixes the orthoplane $P_x$ pointwise.  We will show that $\sigma =I$ or $\sigma$ is the matroid automorphism induced by geometric reflection $r_x$ of the root system through $P_x$.  (Note that reflection fixes $P_x$ pointwise, but also fixes the orthopoint $x$.)

So assume $\sigma(w)=w$ for $w=x$ and for all $w \in P_x$.  Then $\sigma$ fixes (at least) 16 points; we partition the remaining 44 points of the matroid into two classes:

\begin{description}
\item[Class 1]  Let $\{L_1, L_2, \dots, L_{10}\}$ be the pencil of 3-point lines through $x$.  Then $\mathcal{C}_1:=\bigcup L_i - x$ contains 20 points.  We write $L_i=\{x, y_i, z_i\}$ for $1 \leq i \leq 10$, so $\mathcal{C}_1=\{y_1, y_2, \dots, y_{10}, z_1, z_2, \dots, z_{10}\}$.  For a given $i$, we first show $\sigma$ either fixes both $y_i$ and $z_i$ or it swaps them.  

\begin{figure}
\begin{center}
\includegraphics[width=2in]{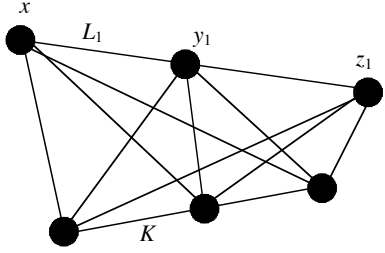}
\caption{The three points $x,  y_1$ and $z_1$ forming the apexes of the three $\Pi_3$'s containing the line $K$ are collinear.  $K$ and $L_1$ are skew, i.e, $r(K \cup L_1)=4$.}
\label{F:3ptlines}
\end{center}
\end{figure}

Consider a 3-point line $K$ in the 15-point plane $P_x$, which we know is fixed pointwise by $\sigma$.  Then $K$ is also fixed pointwise.  The line $K$ is contained in three $\Pi_3$'s (Lemma~\ref{L:flatinc}), with three different apexes, one of which is $x$.  Then it is straightforward to show these three apexes form a 3-point line, so they correspond to one of the lines, say $L_1$, in the pencil through $x$, as in Figure~\ref{F:3ptlines}.  Since matroid automorphisms preserve all $\Pi_3$'s, and since $K$ is fixed, we must have $\sigma(y_1)\in\{y_1,z_1\}$.  

But there are ten 3-point lines in $P_x$, and each of these lines will correspond to one of the $L_j$ in precisely the same way $K$ corresponds to $L_1$.  Thus, we have $\sigma(y_i)\in \{y_i,z_i\}$ for all $i$.

Now  suppose $\sigma(y_1)=y_1$.  We will show that $\sigma(y_i)=y_i$ for all $i$ (and so $\sigma$ is the identity on $\mathcal{C}_1$).  Now every pair of lines $L_i, L_j$ determines either a 6-point plane $\Pi_6$ or a 15-point plane $\Pi_{15}$.  Our incidence counts from Lemma~\ref{L:flatinc} can be used to show that, for a given $i$, precisely 6 lines $L_j$ can be paired with $L_i$ to generate a $\Pi_6$, and the remaining three lines will generate  $\Pi_{15}$'s when paired with $L_i$.  We concentrate on the $\Pi_6$'s.

\begin{figure}
\begin{center}
\includegraphics[width=3in]{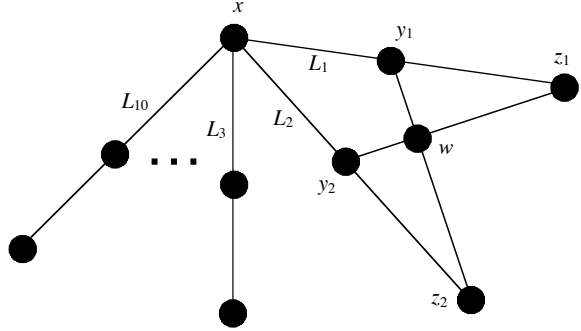}
\caption{The pencil of 3-point lines through $x$.  $L_1$ and $L_2$ generate a $\Pi_6$.}
\label{F:6ptplanes}
\end{center}
\end{figure}

Suppose $L_1$ and $L_2$ determine a $\Pi_6$, where $w$ is the unique point of the $\Pi_6$ not on $L_1$ or $L_2$, as in Figure~\ref{F:6ptplanes}.   Since $\sigma(x)=x$ and $\sigma(y_1)=y_1$, we know $\sigma(z_1)=z_1$.  Thus, if $\sigma$ swaps $y_2$ and $z_2$, then the 3-point line $\{y_1, w, z_2\}$ is mapped to the independent set $\{y_1, w, y_2\}$, which is impossible for a matroid automorphism.  Thus, $\sigma$ fixes $y_2$ and $z_2$.

To show that $\sigma$ fixes all $y_i$ and $z_i$, construct a graph $\Gamma$ as follows:  The 10 vertices are labeled by the lines $L_i$, with an edge between $L_i$ and $L_j$ if and only if  these two lines determine a $\Pi_6$.  Then $\Gamma$ is a regular graph on 10 vertices with every vertex having degree 6, so $\Gamma$ is connected.  Thus we can find a path from $L_1$ to any line $L_j$, and it is clear that each edge of the path forces $\sigma$ to fix the points on the corresponding line.  Thus, $\sigma$ fixes each point in $\mathcal{C}_1$.  (Incidentally, we note the point $w$ is on the fixed 15-point plane $P_x$.  By choosing different pairs of lines in the pencil, we can locate all 15 points of $P_x$ in this way.)

Finally, if $\sigma$ swaps any pair $y_i, z_i$, then  $\sigma$ swaps {\it all} pairs, by a similar argument.  Then $\sigma$ corresponds to the reflection $r_x$ through $P_x$.

\smallskip

\item[Class 2]  Let $\{M_1, M_2, \dots, M_{6}\}$ be the pencil of six 5-point lines through $x$ (again, from Lemma~\ref{L:flatinc}).  Then $\mathcal{C}_2:=\bigcup M_i - x$ contains 24 points.  As we did for $\mathcal{C}_1$, we show that these 24 points are either swapped in 12 transpositions (when $\sigma$ corresponds to reflection) or are all fixed pointwise (when $\sigma=I$).

As before, fix a 5-point line $K$ in the fixed plane $P_x$ and consider the five points in the matroid that form the apexes of $\Pi_5$ planes which use $K$.  Then it is again straightforward to show that these five apexes form a 5-point line, so they correspond to one of the $M_i$.  (This is completely analogous to the situation with $\Pi_3$'s that contain a fixed 3-point line, as in Figure~\ref{F:3ptlines}.)  Since matroid automorphisms preserve $\Pi_5$'s, each line $M_i$ in the pencil must be fixed.

We need to show that $\mathcal{C}_2$ is fixed by $\sigma$ when $\sigma$ fixes $\mathcal{C}_1$ pointwise.  Now the 15 pairs of lines in the pencil $\{M_1, M_2, \dots, M_{6}\}$ generate the 15 $\Pi_{15}$ planes containing $x$.   Thus, if $\sigma$ fixes $\mathcal{C}_1$ pointwise, it fixes two intersecting 3-point lines in each of these $\Pi_{15}$'s, since the $\Pi_{15}$'s containing $x$ are also generated by 15 pairs of lines from $\{L_1, L_2, \dots, L_{10}\}$.  Thus, in each $\Pi_{15}$ that contains $x$, we have a pair of intersecting 3-point lines that are fixed pointwise, and a pair of intersecting 5-point lines that are also fixed (not necessarily  pointwise).  

But the only automorphism of a 15-point plane with this cycle structure on its 3- and 5-point lines is the identity -- this follows from the last two columns of Table 1 of \cite{eg}.  Thus, $\sigma$ fixes $\mathcal{C}_2$ pointwise.

If $\sigma$ swaps each pair $(y_i,z_i)$ in $\mathcal{C}_1$, then we obtain reflection again, and the 24 points in $\mathcal{C}_2$ are all moved in 12 transpositions, corresponding to the reflection $r_x$ through the plane $P_x$.
\end{description}

Thus, every $\sigma \in \stab(x)$ can be decomposed as an automorphism of the plane $P_x$ followed or not by reflection through that plane.  These two operations commute, so we have $\stab(x) \cong S_5 \times \Ints_2$.

\end{proof}

Recall there are seven different equivalence classes of flats: 2, 3 and 5-point lines, and 4 different classes of planes.
\begin{Lem}\label{L:trans}
$\Aut(M(H_4))$ acts transitively on each equivalence class of flats of the matroid.
\end{Lem}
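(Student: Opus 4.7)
My plan is to first establish transitivity on points, then use the stabilizer structure of Lemma~\ref{L:stab} together with the orthoplane $P_x \cong M(H_3)$ to transfer transitivity to each remaining flat class.

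First, I would show $\Aut(M(H_4))$ is transitive on the 60 points. The Coxeter/Weyl group $W(H_4)$ acts transitively on the 120 roots in $\mathbb{R}^4$; identifying each root with its negative descends this to a transitive action on the 60 points of $M(H_4)$, and every geometric symmetry induces a matroid automorphism. Transitivity on $\Pi_{15}$ planes is then immediate from the point--orthoplane correspondence of Proposition~\ref{P:orthodata}(4): any $\sigma \in \Aut(M(H_4))$ takes the five orthoframes through $x$ to those through $\sigma(x)$, so $\sigma(P_x)=P_{\sigma(x)}$.

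Next, I fix a point $x$ and use $\stab(x) \cong S_5 \times \Ints_2$ from Lemma~\ref{L:stab}. The $S_5$ factor is $\Aut(M(H_3))$ acting on the orthoplane $P_x \cong M(H_3)$, and by the results of \cite{eg} this acts transitively on each class of flats of $M(H_3)$. For each remaining flat class of $M(H_4)$, I would exhibit a $\stab(x)$-equivariant bijection with a flat class of $P_x$ (or a $\stab(x)$-orbit therein):
\begin{itemize}
\item 2-point lines through $x$ $\longleftrightarrow$ points of $P_x$, via the five orthoframes through $x$ (Proposition~\ref{P:orthodata}(3));
\item 3-point lines through $x$, and $\Pi_3$'s with apex $x$, each $\longleftrightarrow$ 3-point lines of $P_x$, via the apex correspondence from the proof of Lemma~\ref{L:stab} (the three $\Pi_3$'s containing a fixed 3-point line $K \subset P_x$ have apexes forming a 3-point line through $x$);
\item 5-point lines through $x$, and $\Pi_5$'s with apex $x$, each $\longleftrightarrow$ 5-point lines of $P_x$, by the analogous argument using $\Pi_5$'s;
\item $\Pi_6$'s through $x$ $\longleftrightarrow$ a single $S_5$-orbit of unordered pairs of 3-point lines of $P_x$ (the 30 pairs whose join in $M(H_4)$ is a $\Pi_6$, as opposed to the 15 pairs whose join is a $\Pi_{15}$).
\end{itemize}
In each case, transitivity of $S_5$ on the right-hand class forces transitivity of $\stab(x)$ on the left-hand class. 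Combined with point-transitivity, this yields transitivity on each flat class of $M(H_4)$: an arbitrary flat can be moved by $\Aut(M(H_4))$ so that it contains $x$ (or has $x$ as apex, for $\Pi_3$ and $\Pi_5$), and then $\stab(x)$ finishes the job.

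The main obstacle is the $\Pi_6$ case: one must check that the 30 pairs of 3-point lines in $M(H_3)$ corresponding to the $\Pi_6$-generating pairs through $x$ are indeed a single $S_5$-orbit, rather than a union of smaller orbits. A meeting/non-meeting dichotomy partitions the 45 unordered pairs of 3-point lines of $M(H_3)$ into invariants of size 15 and 30, and verifying no further splitting follows from the explicit line structure of $M(H_3)$ in \cite{eg}. This bookkeeping is where the bulk of the calculation lies; the other bijections are essentially read off from the proofs of Lemma~\ref{L:stab} and Proposition~\ref{P:orthodata}.
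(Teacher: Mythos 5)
Your argument is correct, but it takes a genuinely different route from the paper's. The paper handles lines by a case analysis on the relative position of two $k$-point lines (coplanar in a common $\Pi_{15}$, skew, or intersecting inside a $\Pi_6$), invoking transitivity of $\Aut(M(H_3))$ on the lines within a single $\Pi_{15}$ together with transitivity on $\Pi_{15}$'s; for $\Pi_6$'s it first moves a 2-point line of one plane onto a 2-point line of the other and then observes that reflection through either of the two $\Pi_{15}$'s containing that 2-point line must interchange the only two $\Pi_6$'s containing it. You instead run a uniform orbit--stabilizer argument: point-transitivity plus transitivity of $\stab(x)\cong S_5\times \Ints_2$ on the flats of each class through (or apexed at) $x$, the latter obtained from $\stab(x)$-equivariant bijections with flat classes of $P_x\cong M(H_3)$. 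Your scheme is more systematic and yields the stabilizer orders of all flat classes as a by-product, while the paper's reflection trick for $\Pi_6$'s sidesteps the one genuinely new verification your route requires: that the 30 pairs of 3-point lines of $M(H_3)$ corresponding to $\Pi_6$'s through $x$ form a single $S_5$-orbit. That claim is true, and you should supply it rather than defer it: the 45 unordered pairs of 3-point lines of $M(H_3)$ split into 15 intersecting pairs and 30 non-intersecting pairs (each point lies on exactly two 3-point lines, so intersecting pairs biject with points, giving one orbit of size 15); for the other class, identifying the ten 3-point lines with the 2-element subsets of $\{1,\dots,5\}$ so that the two orbit sizes match up, the 30 non-intersecting pairs correspond to overlapping 2-subsets, on which $S_5$ acts transitively. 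With that single verification written out, your proof is complete and is a legitimate alternative to the one in the paper.
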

\begin{proof}
The proof makes use of the fact that the Coxeter/Weyl group acts transitively on the roots of $H_4$ (see \cite{cox}).  Since every geometric symmetry of the root system gives rise to a matroid automorphism, we immediately get $\Aut(M(H_4))$ acts transitively on the points of the matroid.   The point-orthoplane correspondence then gives us a transitive action on the $\Pi_{15}$'s.

We now consider the remaining flat classes.

\begin{description}
\item[Rank 2 flats] Let $\mathcal{L}_k$ be the class of all $k$-point lines, for $k=2,3$ and 5, and let $L_1$ and $L_2$ be two $k$-point lines.  If $L_1$ and $L_2$ are both in the same $\Pi_{15}$, then we use the fact (see \cite{eg}) that $\Aut(M(H_3))$ acts transitively on lines to get an automorphism $\sigma$ mapping $L_1$ to $L_2$. 

 If $L_1$ and $L_2$ are not contained in any $\Pi_{15}$, then either $r(L_1\cup L_2)=4$, i.e., the lines $L_1$ and $L_2$ are skew, or $L_1$ and $L_2$ are 3-point lines in a $\Pi_6$.  In the former case, find two 15-point planes $P_1$ and $P_2$ with $L_1 \subseteq P_1$ and $L_2 \subseteq P_2$.  Now use the transitivity on 15-point planes to map $P_1$ to $P_2$, and then use transitivity on lines within $P_2$ to map the image of $L_1$ to $L_2$.
 
 If $L_1=\{a,b,c\}$ and $L_2=\{a,d,e\}$ are intersecting 3-point lines in a $\Pi_6$, then use transitivity on points to map $b$ to $d$.  This must carry $L_1$ to $L_2$.
 
 \smallskip
 \item[Rank 3 flats]  We already have $\Aut(M(H_4))$ is transitive on 15-point planes.  It is also clear that transitivity of 3- and 5-point lines gives us transitivity on $\Pi_3$ and $\Pi_5$ planes.  It remains to prove transitivity for $\Pi_6$ planes.
 
 Let $G_1$ and $G_2$ be two $\Pi_6$ planes, and let $L_1$ and $L_2$ be 2-point lines with $L_i \subseteq G_i$ ($i=1$ or 2).  Then transitivity on 2-point lines allows us to map $L_1 \mapsto L_2$.  So we can assume $G_1$ and $G_2$ share the 2-point line $xy$, as in Figure~\ref{F:6ptplanestrans}.  By Lemma~\ref{L:flatinc}, $G_1$ and $G_2$ are the only two $\Pi_6$'s that contain $xy$.  Then there are two 15-point planes that also contain the 2-point line $xy$; call these two planes $P_1$ and $P_2$.  Then reflecting through either $P_1$ or $P_2$ will map $G_1$ to $G_2$, since reflection must send a $\Pi_6$ to a $\Pi_6$, reflections move 44 points, and $G_1$ and $G_2$ are the only two $\Pi_6$'s containing $xy$.
 
\end{description}

\begin{figure}
\begin{center}
\includegraphics[width=4in]{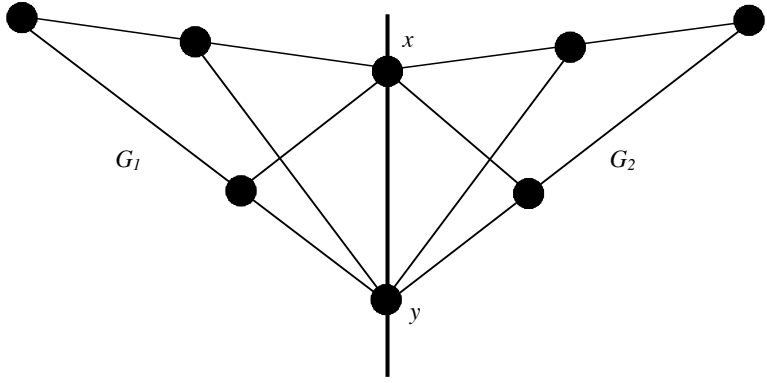}
\caption{Two $\Pi_6$ planes share the 2-point line $xy$. $r(G_1 \cup G_2)=4$.}
\label{F:6ptplanestrans}
\end{center}
\end{figure}

\end{proof}

As an example of how transitivity on $\Pi_6$ planes works, consider the matrices $A$ and $B$ below.  The columns of $A$ satisfy the equation $x_3=x_4$, and the columns of $B$ satisfy $x_1=x_2$.  Note that the corresponding 6-point planes  have two points in common - the 2-point line $ef$.
$$
A=\kbordermatrix{&a&b&c&d&e&f\\
& 1 & 0 & 1 & 1 & 1 & 1 \\
& 0 & 1 & -1&  -1 & 1 & 1\\
& 0 & 0 & 1 & -1 & -1& 1\\
& 0 & 0 & 1 & -1 & -1& 1}
\hskip.25in
B=\kbordermatrix{&a'&b'&c'&d'&e&f\\
& 0 & 0 & 1 & 1 & 1 & 1 \\
& 0 & 0 & 1&  1 & 1 & 1\\
& 1 & 0 & 1 & -1 & -1& 1\\
& 0 & 1 & -1 & 1 & -1& 1}
$$

To find a matroid automorphism that maps $G_1$ to $G_2$, we let $x=[1,-1,1,-1]$ and $y=[1,-1,-1,1]$.  Then $\{e,f,x,y\}$ is an orthoframe, i.e., $e,f \in P_x$ and $e, f \in P_y$.  Reflection through the plane $P_x$ is accomplished by $\displaystyle{v\mapsto v-\frac{2v\cdot x}{x\cdot x}x}$.  This maps $a \mapsto d', b \mapsto c', c \mapsto b', d \mapsto a'$.  The reader can check reflection through $P_y$ maps $a \mapsto c', b \mapsto d', c \mapsto a', d \mapsto b'$.  In either case, we have a map interchanging $G_1$ and $G_2$.

Alternatively,  we can map one plane to the other by performing two row swaps on the matrix $H$: $(13)(24)$. This is an {\it even} permutation of the rows, and so maps $M(H_4)$ to itself.

It is interesting to note that although $\Aut(M(H_4))$ acts transitively on pairs of intersecting 5-point lines, it does not act transitively on pairs of intersecting 3-point lines.  The latter fall into two equivalence classes, as we have already seen:  A pair of intersecting 3-point lines determines either a $\Pi_6$ or a $\Pi_{15}$.

$\Aut(M(H_4))$ also acts transitively on orthoframes.  We omit the short proof.

\begin{Lem}\label{L:orthotrans}
$\Aut(M(H_4))$ acts transitively on orthoframes.
\end{Lem}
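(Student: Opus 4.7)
The plan is to deduce transitivity on orthoframes directly from two facts already established: transitivity on 2-point lines (Lemma~\ref{L:trans}) and the statement in Proposition~\ref{P:orthodata}(3) that each 2-point line lies in exactly one orthoframe.

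First I would observe that any matroid automorphism $\sigma$ sends orthoframes to orthoframes. Indeed, by Definition~\ref{D:ortho} an orthoframe is a basis in which every pair of elements spans a 2-point line, and both the property of being a basis and the property of being a 2-point line are preserved by $\sigma$.

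Next, given two orthoframes $O_1$ and $O_2$, I would choose any 2-point line $L_1$ contained in $O_1$ and any 2-point line $L_2$ contained in $O_2$ (such lines exist because each orthoframe, as a 4-element mutually 2-point-line-connected basis, contains $\binom{4}{2}=6$ such lines). By Lemma~\ref{L:trans}, there exists $\sigma \in \Aut(M(H_4))$ with $\sigma(L_1)=L_2$. Then $\sigma(O_1)$ is an orthoframe containing $L_2$, and by Proposition~\ref{P:orthodata}(3) the unique orthoframe containing $L_2$ is $O_2$, so $\sigma(O_1)=O_2$.

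There is essentially no obstacle here; the work has already been done in Lemma~\ref{L:trans} and Proposition~\ref{P:orthodata}(3). The only thing to verify carefully is the uniqueness statement: if $O_1$ were the only orthoframe containing $L_1$ but $\sigma(O_1)$ happened to coincide with some \emph{other} orthoframe through $L_2$, we would be stuck — but that possibility is precisely ruled out by Proposition~\ref{P:orthodata}(3). This is why the author describes the proof as ``short'' and omits it.
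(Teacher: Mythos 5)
Your proof is correct, and since the paper explicitly omits its proof of Lemma~\ref{L:orthotrans}, yours supplies the missing argument in what is surely the intended way: automorphisms preserve orthoframes because they preserve bases and 2-point lines, transitivity on 2-point lines (Lemma~\ref{L:trans}) moves a 2-point line of $O_1$ onto one of $O_2$, and the uniqueness in Proposition~\ref{P:orthodata}(3) then forces $\sigma(O_1)=O_2$. No gaps; there is also no circularity, since the proof of Lemma~\ref{L:trans} for 2-point lines relies only on the $\Pi_{15}$ structure and Lemma~\ref{L:stab}, not on orthoframe transitivity.
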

Recall a group $G$ acting on a set $X$ is {\it primitive} if $G$ acts transitively and preserves no non-trivial blocks of $X$.  We now prove the action of $\Aut(M(H_4))$ on the points of the matroid is primitive.

\begin{Thm}\label{T:prim}  The automorphism  group action is primitive on the 60 points of the ground set of $M(H_4)$.
\end{Thm}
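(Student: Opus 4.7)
The plan is to apply the standard criterion that a transitive action is primitive if and only if the only blocks containing a fixed point $x$ are $\{x\}$ and the entire ground set $E$. By Lemma~\ref{L:trans} the action is transitive on the 60 points, so I fix an arbitrary point $x \in M(H_4)$ and any block $B$ with $x \in B$, and show $|B| \in \{1, 60\}$. Two standard facts pin $|B|$ down. First, $B$ is invariant under $\stab(x)$, so $B \setminus \{x\}$ is a union of $\stab(x)$-orbits on $E \setminus \{x\}$. Second, because the blocks of a block system partition $E$ into equal-sized pieces, $|B|$ divides $|E|=60$.

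The main step is to identify the $\stab(x)$-orbits on $E \setminus \{x\}$. The 59 other points split, according to the matroid type of the line they span with $x$, into three $\stab(x)$-invariant sets: the 15 points of the orthoplane $P_x$ (joined to $x$ by 2-point lines), the 20 points $\mathcal{C}_1$ on the ten 3-point lines through $x$, and the 24 points $\mathcal{C}_2$ on the six 5-point lines through $x$, exactly as in the proof of Lemma~\ref{L:stab}. I claim each is a single orbit. For $P_x$, the factor $S_5 \leq \stab(x)$ acts as $\Aut(M(H_3))$ and is transitive on the 15 points of $P_x$ by \cite{eg}. For $\mathcal{C}_1$, the apex-of-$\Pi_3$ correspondence of Lemma~\ref{L:stab} bijects the ten lines $L_i$ through $x$ with the ten 3-point lines of $P_x$, so $S_5$ permutes the $L_i$ transitively; the reflection $r_x$ generating the $\Ints_2$ factor swaps the two non-$x$ points on each $L_i$, yielding a transitive $\stab(x)$-action on $\mathcal{C}_1$. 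For $\mathcal{C}_2$ the same strategy applies via the apex-of-$\Pi_5$ correspondence with the six 5-point lines of $P_x$: $S_5$ is transitive on the lines $M_j$, and transitivity inside each $M_j \setminus \{x\}$ follows from the fact that the setwise stabilizer of a 5-point line in $\Aut(M(H_3))$ (a Frobenius group of order 20) acts transitively on that line, again from \cite{eg}.

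With the three orbit sizes $\{15, 20, 24\}$ in hand, $|B|-1$ is a sum of a subset of $\{15,20,24\}$, so $|B| \in \{1, 16, 21, 25, 36, 40, 45, 60\}$. Intersecting with the divisors of $60$ leaves only $|B| \in \{1, 60\}$, so every block is trivial and the action is primitive. I expect the only subtle step to be establishing transitivity of $\stab(x)$ on $\mathcal{C}_2$, namely checking that the stabilizer of a 5-point line $M_j$ in $\stab(x)$ acts transitively on the four non-$x$ points of $M_j$; this reduces to a statement about $M(H_3)$ that is already implicit in Table 1 of \cite{eg} but deserves to be spelled out. If this transitivity were to fail, the orbit arithmetic could in principle admit additional block sizes, so this is the pressure point of the proof.
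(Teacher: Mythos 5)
Your overall strategy --- fix $x$, observe that a block containing $x$ is $\{x\}$ together with a union of $\stab(x)$-orbits, and intersect the resulting sizes with the divisors of $60$ --- is the same as the paper's, and your decomposition of $E\setminus\{x\}$ into $P_x$, $\mathcal{C}_1$ and $\mathcal{C}_2$ matches the paper's proof exactly. The one genuine gap is the one you flagged yourself: transitivity of $\stab(x)$ on $\mathcal{C}_2$ does not follow from the fact that the stabilizer of a $5$-point line $K\subseteq P_x$ in $\Aut(M(H_3))\cong S_5$ is a Frobenius group $F_{20}$ acting transitively on the five points of $K$. The four points of $M_j\setminus\{x\}$ are the non-$x$ apexes of the five $\Pi_5$'s over $K$; they are indexed by the planes over $K$, not by the points of $K$, and the two actions are genuinely different. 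Indeed, the normal $C_5\le F_{20}$ must act trivially on any $4$-element set, so $F_{20}$ acts on $M_j\setminus\{x\}$ through a quotient of $C_4$, and its transitivity there is not something you can read off from \cite{eg}. (The claim is in fact true, but it is delicate, and it genuinely uses the non-geometric half of $\stab(x)$.)

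Fortunately your argument does not need single-orbit transitivity on $\mathcal{C}_2$, and the paper's proof shows the economical repair. Transitivity of $\stab(x)$ on the six lines $M_j$ (which you do establish) together with the fact that the reflection $r_x$ moves the four points of each $M_j\setminus\{x\}$ in two transpositions already forces any $\stab(x)$-invariant set to meet every $M_j\setminus\{x\}$ in the same number of points, and that number must be $0$, $2$ or $4$; hence $\mathcal{C}_2$ contributes $0$, $12$ or $24$ to a block, whether or not it is a single orbit. Rerunning your arithmetic with contributions $\{0,15\}$, $\{0,20\}$ and $\{0,12,24\}$ gives possible block sizes $\{1,13,16,21,25,28,33,36,40,45,48,60\}$, of which only $1$ and $60$ divide $60$. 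This is essentially what the paper does (it disposes of the case $P_x\subseteq\Delta$ separately via Proposition~\ref{P:planeintersect} rather than by divisibility, but either route closes that case). In short: right approach, one transitivity claim that is not justified by the fact you cite, and a weaker statement you already have in hand suffices to complete the proof.
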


\begin{proof}  Suppose $E$ is partitioned into blocks, and suppose $\Delta$ is a block.  Then, for any $\sigma \in \Aut(M(H_4)), \Delta \cap \sigma(\Delta)=\Delta$ or $\emptyset$.  We must prove $|\Delta|=1$ or 60.  

Suppose $x\in \Delta$.  Note for all $\sigma \in \stab(x)$, we must have $ \sigma(\Delta)=\Delta$.  Since $\Aut(M(H_3))\cong S_5 \leq \stab(x)$ acts transitively on the 15 points of  $P_x$, we must have either $P_x \subseteq \Delta$ or $P_x \cap \Delta = \emptyset.$  There are now two cases to consider.

\begin{itemize}
\item If $P_x \subseteq \Delta$, then since $P_x$ meets every other 15-point plane (from Prop.~\ref{P:planeintersect}), we get $\sigma(P_x) \cap P_x \neq \emptyset$ for all $\sigma \in \Aut(M(H_4))$.  Thus, $\Delta \cap \sigma(\Delta) = \Delta$ for all $\sigma \in \Aut(M(H_4))$, i.e., $\sigma(\Delta)=\Delta$ for all $\sigma$.  But this immediately gives $\Delta=E$, i.e., $\Delta$ is the trivial block formed by the entire ground set of the matroid.

\item If $P_x \cap \Delta = \emptyset$, we restrict to stab$(x)$ and consider all the lines that contain $x$.  We know $x$ is in 15 2-point lines, but the 15 points that produce these 2-point lines form $P_x$, so none of these 15 points is in $\Delta$.  

There are ten 3-point lines through $x$, which we denote $\{L_1, L_2, \dots, L_{10}\}$, as in the proof of Lemma~\ref{L:stab}.  From that proof and the fact that the action of  $\Aut(M(H_4))$ is transitive on 3-point lines (Lemma~\ref{L:trans}), we must have either $L_i \subseteq \Delta$ for all $1 \leq i \leq 10$, or $\Delta \cap L_i = \{x\}$ for all $i$.  (Note:  Every $\sigma \in \mbox{ stab}(x)$ maps the pencil of lines through $x$ to itself, so each line contributes the same number of points to $\Delta$, and reflecting through the plane $P_x$ forces us to take 0 or 2 points from each $L_i$, not counting $x$.)  Thus, $\Delta$ contains either 0 points or 20 points from the $L_i$ pencil, not counting $x$.

Using an analogous argument on the pencil of six 5-point lines through $x$, we find each such line must meet $\Delta$ in the same number of points, and that number must be 0, 2 or 4 per line (not counting $x$). This means $\Delta$ contains 0, 12 or 24 points from this pencil, again not counting $x$.

Putting all of  this together gives the $|\Delta|=1, 13, 21, 25, 33$ or $45$.  But $|\Delta|$ must divide 60, since the blocks partition $E$.  Thus $|\Delta|=1$, so $\Delta=\{x\}$.
\end{itemize}

\end{proof}

The next result follows immediately from  Theorem 1.7 of \cite{cam}.
\begin{Cor}\label{C:maxsubgp}
$\stab(x)$ is a maximal subgroup of $\Aut(M(H_4))$.
\end{Cor}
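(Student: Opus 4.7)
The plan is to invoke the standard correspondence from permutation group theory between primitive actions and maximal point stabilizers. Specifically, Theorem 1.7 of \cite{cam} asserts that if a group $G$ acts transitively on a set $X$, then the action is primitive if and only if the stabilizer $\stab(x)$ of any point $x \in X$ is a maximal subgroup of $G$. This is exactly the equivalence I want to apply, with $G = \Aut(M(H_4))$ and $X$ the 60-point ground set of the matroid.

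First I would verify that both hypotheses on the group action are already in hand. Transitivity of $\Aut(M(H_4))$ on the points of $M(H_4)$ is included in Lemma~\ref{L:trans} (the case of rank-1 flats), and primitivity of this action is Theorem~\ref{T:prim}. Together these say that $\Aut(M(H_4))$ acts transitively and primitively on $X$, so the forward direction of the equivalence in Theorem 1.7 of \cite{cam} applies.

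Applying that theorem, the stabilizer $\stab(x)$ of any point $x$ is a maximal subgroup of $\Aut(M(H_4))$, which is the claim of the corollary. Since everything here is a direct application of a named theorem to results already proved in the paper, there is no real obstacle: the only thing to check is that the hypotheses (transitivity and primitivity) match those of the cited theorem, which they do verbatim.
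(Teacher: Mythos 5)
Your proposal is correct and is exactly the paper's argument: the corollary is stated as an immediate consequence of Theorem 1.7 of \cite{cam}, using the transitivity from Lemma~\ref{L:trans} and the primitivity from Theorem~\ref{T:prim}. You have simply made explicit the hypothesis-checking that the paper leaves implicit.
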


It is worth pointing out that the action of $\Aut(M(H_3))$ on $M(H_3)$  is imprimitive -- the partition into rank-3 orthoframes is a non-trivial partition of the 15 elements of the matroid into 5 blocks.  This corresponds geometrically to permuting the 5 cubes embedded in a dodecahedron.

The root system $H_4$ has a Coxeter/Weyl group of size 14,400.  Coxeter's notation   \cite{cox}  for the group  $[3,3,5]$ suggests its construction as a reflection group.  
$$[3,3,5] = \langle R_1, R_2, R_3, R_4 \mid (R_1R_2)^3=(R_2R_3)^3=(R_3R_4)^5=I \rangle$$
In this presentation, we assume each $R_i$ is a reflection, i.e., $R_i^2=I$, and that $(R_iR_j)^2=I$ for $|i-j|>1$, i.e., reflections $R_i$ and $R_j$ are orthogonal for $|i-j|>1$.

Conway and Smith (Table 4.3 of \cite{cs})  express this group as $\pm [I \times I] \cdot 2$, where $I \cong A_5$ is the chiral (or direct) symmetry group of the icosahedron.  In 4-dimensions, $I \times I$ is best understood as a rotation group via quaternion multiplication.  

\begin{Thm}\label{T:aut}  Let $W$ be the Coxeter/Weyl isometry group for the root system $H_4$, with center $Z$ generated by central inversion $\bf{v} \mapsto -\bf{v}$.
\begin{enumerate}
\item $|\Aut(M(H_4))|=|W|=14,400$.
\item $W/Z$ is an index $2$ subgroup of $\Aut(M(H_4))$.
\end{enumerate}

\end{Thm}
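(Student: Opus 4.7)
The plan is to prove part (1) by orbit--stabilizer, and to deduce part (2) by showing that the natural homomorphism $\varphi : W \to \Aut(M(H_4))$ has kernel exactly $Z$.

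For part (1), Lemma~\ref{L:trans} gives that $\Aut(M(H_4))$ acts transitively on the 60 points of the matroid, while Lemma~\ref{L:stab} identifies the stabilizer of any point as $S_5 \times \Ints_2$, of order $240$. Orbit--stabilizer then yields $|\Aut(M(H_4))| = 60 \cdot 240 = 14{,}400 = |W|$.

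For part (2), define $\varphi : W \to \Aut(M(H_4))$ by sending each $g \in W$ to the permutation of the 60 antipodal pairs of roots that $g$ induces; this is well-defined because $W$ preserves $H_4$ setwise, and it is a homomorphism by construction. It suffices to show $\ker \varphi = Z$, since then $\varphi$ descends to an injection $W/Z \hookrightarrow \Aut(M(H_4))$, and comparison of orders gives index $|\Aut(M(H_4))|/|W/Z| = 14{,}400/7{,}200 = 2$. To compute the kernel, suppose $\sigma \in \ker \varphi$. Then $\sigma(v) \in \{v,-v\}$ for every root $v \in H_4$, so $\sigma^2$ fixes every root; since the roots span $\mathbb{R}^4$, this forces $\sigma^2 = I$. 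Being an isometry with $\sigma^2 = I$, $\sigma$ admits an orthogonal decomposition $\mathbb{R}^4 = V_+ \oplus V_-$ into its $\pm 1$-eigenspaces, and every root lies in $V_+ \cup V_-$. If both $V_+ \cap H_4$ and $V_- \cap H_4$ were nonempty, then $H_4$ would split as an orthogonal union of two nonempty root subsystems, contradicting the irreducibility of $H_4$. Hence one eigenspace contains every root and $\sigma = \pm I$, so $\ker \varphi = Z$ as required.

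The main obstacle is the kernel computation, for which the only nontrivial ingredient is the irreducibility of $H_4$ -- a standard fact -- used to rule out a genuine orthogonal splitting of the roots along the eigenspaces of a kernel element. Everything else is an immediate consequence of Lemmas~\ref{L:trans} and \ref{L:stab} together with the orbit--stabilizer theorem; the existence of non-geometric automorphisms (already flagged in the introduction) is then encoded precisely by the factor-of-two gap between $|W/Z|$ and $|\Aut(M(H_4))|$.
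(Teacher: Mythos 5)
Your proposal is correct and follows essentially the same route as the paper: part (1) via orbit--stabilizer using Lemmas~\ref{L:trans} and \ref{L:stab}, and part (2) via the natural map $W \to \Aut(M(H_4))$ killing central inversion. The only difference is that you explicitly verify $\ker\varphi = Z$ using the irreducibility of $H_4$, a step the paper's proof leaves implicit; that verification is sound and a worthwhile addition.
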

\begin{proof}
\begin{enumerate}
\item From Lemma~\ref{L:stab}, we have $\stab(x)\cong S_5 \times \Ints_2$.  Since the orbit of $x$ is all of $E$ (as the automorphism group is transitive), we have $|\Aut(M(H_4))|=|S_5 \times \Ints_2| \cdot |E|=14,400$.
\item Every isometry of $W$ gives a matroid automorphism, and central inversion in $W$ corresponds to the identity in $\Aut(M(H_4))$.  The result now follows from (1).
\end{enumerate}

\end{proof}

In \cite{dut}, $\Aut(M(H_4))$ is obtained as follows:  First extend  the root system $H_4$ by adding an isomorphic copy $H_4 '$ of $H_4$.  Then $\Aut(M(H_4)) \cong W(H_4 \cup H_4')/Z$, where $Z \cong \Ints_2$ is the subgroup generated by central inversion ($Z$ is the center of $W$).    The $H_4'$ copy is obtained by using the field automorphism $\phi:\Rats[\tau] \to \Rats[\tau]$ given by $\tau \mapsto \bar{\tau}$ on the original root system $H_4$.  (Note that this map must operate on a different set of coordinates than those treated here, since the 24 roots whose coordinates avoid $\tau$ are fixed by this map.)

We summarize this section with the following consequence of Theorem~\ref{T:aut}:

\begin{quotation}
The automorphism groups of the root systems $H_3$ and $H_4$ have the same connection to the Coxeter/Weyl groups $W(H_3)$ and $W(H_4)$.  In each case, half of the matroid automorphisms are geometric and half are not.  The non-geometric automorphisms arise from the $S_5$ action in $\stab(x)$ that permit odd permutations of rank-3 orthoframes in $\Pi_{15}$ planes.
\end{quotation}

\section{Geometric interpretations of $M(H_4)$.}\label{S:geo}

We can interpret the flats and orthoframes of $M(H_4)$ in terms of the 120-cell and its dual, the 600-cell.   We give the number of vertices, edges, 2-dimensional faces and 3-dimensional faces for the 120- and 600-cell  in Table~\ref{T:geoinfo} - this information  appears in Table 1(ii) of \cite{cox}.

\begin{table}[htdp]
\caption{Number of elements of the 120- and 600-cell.}
\begin{center}
\begin{tabular}{|c|c|c|c|c|} \hline
Object & Vertices & Edges & 2D faces & 3D facets \\ \hline
120-cell & 600 & 1200 & 720 & 120 \\ \hline
600-cell & 120 & 720 & 1200 & 600 \\ \hline 
\end{tabular}
\end{center}
\label{T:geoinfo}
\end{table}%

The 2-dimensional faces of the 120-cell are pentagons and the 3-dimensional facets are dodecahedra; for the 600-cell, 2-dimensional faces are triangles and 3-dimensional facets are tetrahedra.

Now each of the 60 points of the matroid corresponds to a pair of roots $\pm \bf{v}$ of the root system $H_4$. Since the roots are also the vertices of the 600-cell, we immediately get a correspondence between the points of the matroid and the pairs of opposite vertices of the 600-cell.  We can interpret the other geometric elements of the 120- and 600-cell through the matroid $M(H_4)$ in Table~\ref{T:geocorr}.  We also remark the 75  matroid orthoframes correspond to 75 embedded hypercubes in the 120-cell or 600-cell.

\begin{table}[htdp]
\caption{Correspondence between geometric elements and flats in $M(H_4)$.}
\begin{center}
\begin{tabular}{|| l c l||} \hline

Geometric Family & & Matroid Flat \\ \hline \hline
120 Vertices of 600-cell & $\Leftrightarrow$ & 60 Points \\ \hline
1200 Triangles of 600-cell  & $\Leftrightarrow$ & 600 $\Pi_3$'s  \\
720 Pentagons of 120-cell  & $\Leftrightarrow$ & 360 $\Pi_5$'s \\ \hline
600 Tetrahedra of 600-cell  & $\Leftrightarrow$ & 300 $\Pi_6$'s \\
120 Dodecahedra of 120-cell  & $\Leftrightarrow$ & 60 $\Pi_{15}$'s \\ \hline

\end{tabular}
\end{center}
\label{T:geocorr}
\end{table}%

We comment briefly on some of these connections.  For the root system $H_3$, this correspondence is explored in detail in \cite{eg}.  In that case, the roots are parallel to the edges of an icosahderon.  This makes the matroid correspondence immediate:  3-point lines of the matroid correspond to pairs of triangles in the icosahedron and  5-point lines correspond to pairs of vertices of the icosahedron (or pentagons of the dual dodecahedron).  

The chief difficulty in applying the results of \cite{eg} to $H_4$ arises from the fact that the edges of the 120-cell or 600-cell are no longer parallel to the roots.  But, since each 15-point plane is isomorphic to $M(H_3)$ as a matroid, the correspondence between dodecahedra and $\Pi_{15}$'s is clear.  We explain the connection between the 720 pentagons in the 120-cell and the 360 $\Pi_5$'s in the matroid.  In the 120-cell, a given pentagon is in two dodecahedra, but in $M(H_4)$, a given 5-point line is in 5 $\Pi_{15}$'s.  We can ``correct'' this by using $\Pi_5$'s, since each 5-point line of the matroid is in precisely five $\Pi_5$'s.

Finally, we can use the orthopoint-orthoplane bijection to get a matroidal interpretation for the 120-cell/600-cell duality.

\begin{Prop}\label{P:ptplanedual}
Let $F$ be the collection of 15-point planes, and let $B$ be the bipartite graph with vertex set $E \cup F$ with an edge joining the point $x$ to the plane $P$ if and only if $x \in P$.  Then $\Aut(B)\cong \Aut(M(H_4)) \times \Ints_2$.
\end{Prop}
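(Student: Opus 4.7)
The plan is to exhibit $\Aut(B)$ as generated by two commuting subgroups: a copy of $\Aut(M(H_4))$ that preserves the bipartition, and an order-2 subgroup generated by a bipartition-swapping involution coming from the orthopoint-orthoplane bijection.

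First I would build the two subgroups. Any $\sigma\in\Aut(M(H_4))$ acts on $E$ and sends each $15$-point plane to a $15$-point plane, so it acts on $F$ as well and preserves incidence; this embeds $\Aut(M(H_4))$ into the stabilizer $\Aut(B)_{E,F}$ of the bipartition. Next, define the involution $\tau\in\Aut(B)$ that sends $x\mapsto P_x$ and $P_x\mapsto x$; this is a well-defined bijection by Definition~\ref{D:names}. To see $\tau$ is a graph automorphism, I would use Proposition~\ref{P:planeeqns}: if $x$ has coordinate vector $[a_1,a_2,a_3,a_4]$ and $y$ has $[b_1,b_2,b_3,b_4]$, then $x\in P_y$ iff $a_1b_1+\cdots+a_4b_4=0$, which is symmetric, so $x\in P_y$ iff $y\in P_x$; hence $\tau$ preserves adjacency and swaps the two sides of $B$.

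Next I would check these subgroups commute and intersect trivially. A short computation shows $\tau\sigma\tau^{-1}=\sigma$ on both $E$ and $F$ for every $\sigma\in\Aut(M(H_4))$: on $E$, $\tau\sigma\tau^{-1}(x)=\tau\sigma(P_x)=\tau(P_{\sigma(x)})=\sigma(x)$, using that $\sigma$ sends $P_x$ to $P_{\sigma(x)}$ (both are the unique $\Pi_{15}$ whose point set is $\sigma(P_x)$). Since $\tau$ swaps sides and $\Aut(M(H_4))$ fixes sides, the intersection is trivial, so together they generate a subgroup isomorphic to $\Aut(M(H_4))\times\Ints_2$.

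The main obstacle is the reverse containment: showing every element of $\Aut(B)$ is obtained this way. The bipartition-swapping elements form a single coset of $\Aut(B)_{E,F}$, and composing any such element with $\tau$ lands back in $\Aut(B)_{E,F}$, so it suffices to prove $\Aut(B)_{E,F}\cong\Aut(M(H_4))$. Given $(\alpha,\beta)\in\Aut(B)_{E,F}$, the adjacency condition $x\in P\Leftrightarrow\alpha(x)\in\beta(P)$ forces $\alpha(P)=\beta(P)$ as subsets of $E$ for every $P\in F$, so $\alpha$ permutes the $60$ fifteen-point planes as sets. I would now invoke Proposition~\ref{P:planeintersect} and Proposition~\ref{P:orthorecon}: every $2$-, $3$-, and $5$-point line occurs as the intersection of two $\Pi_{15}$'s (and in fact is determined by that intersection via modularity, since $r(P_1\cap P_2)=2$), so $\alpha$ permutes all rank-$2$ flats of the matroid. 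The orthoframes are then recovered as $4$-element sets in which every pair spans a $2$-point line, and by Proposition~\ref{P:orthorecon} the orthoframes determine the entire flat lattice; therefore $\alpha\in\Aut(M(H_4))$, and $\beta$ is the action of $\alpha$ on $F$ by the uniqueness argument above. This gives $|\Aut(B)|=2\cdot 14{,}400$ and $\Aut(B)\cong\Aut(M(H_4))\times\Ints_2$.
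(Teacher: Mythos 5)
Your proposal is correct and takes essentially the same approach as the paper, whose proof is only a two-sentence sketch (matroid automorphisms act on $B$, the incidence data reconstructs the flats, and the orthopoint--orthoplane swap gives the extra involution); you supply the details the paper leaves implicit, in particular the symmetry $x\in P_y\Leftrightarrow y\in P_x$ from Proposition~\ref{P:planeeqns} and the reconstruction argument via Propositions~\ref{P:planeintersect} and \ref{P:orthorecon}. One cosmetic remark: avoid the symbol $\tau$ for the swapping involution, since the paper reserves it for the golden ratio.
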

\begin{proof} 
It is clear the bipartite graph $B$ allows us to reconstruct all the flats of the matroid, and any matroid automorphism acting on $E$ will necessarily be a graph automorphism of $B$.  Further, we can swap the points and the planes --  map a point $x$ to its orthoplane $P_x$. 

\end{proof}

We conclude by observing that it should be possible to treat matroids associated to other root systems (especially the exceptional $E_6, E_7,$ and $E_8$) in a coherent way that also explains the structure of those matroids.  We hope to undertake such a program in the future.


\begin{thebibliography}{99}
%
%
%

\bibitem{cam}
P. J. Cameron, {\it Permutation Groups}, London Math Soc. Student Texts {\bf 45}, Cambridge Univ. Press, Cambridge (1999).

\bibitem{cs}
J. H. Conway and D. Smith, {\it On Quaternions and Octonions}, A. K. Peters, Natick, Massachusetts (2003).

\bibitem{cox} H.S.M. Coxeter, {\it Regular Polytopes}, Dover, New York (1973).

\bibitem{dut}
M Dutour Sikiric, A. Felikson and P. Tumarkin, {\it Automorphism groups of root systems matroids}, arXiv:0711.4670.

\bibitem{eg}
K. Ehly and G. Gordon,  {\it Matroid automorphisms of the root system $H_3$}, Geom. Dedicata {\bf 130}  (2007) 149-161. 

\bibitem{fggp} S. Fried, A. Gerek, G. Gordon and A. Perunicic, {\it Matroid automorphisms of the $F_4$ root system,} Electronic J. Comb. {\bf 14} (2007) R78, 12 pages.

\bibitem{fglp} L. Fern, G. Gordon, J. Leasure and S. Pronchik, {\it Matroid automorphisms and symmetry groups},  Combinatorics, Probability \& Computing {\bf 9} (2000), 105-123

\bibitem{gb} L. Grove and C. Benson, {\it Finite Reflection Groups}, second ed., Springer, New York (1985).

\bibitem{h} J. Humphreys, {\it Reflection Groups and Coxeter Groups}, Cambridge University Press, Cambridge (1990).


\bibitem{ox} J. Oxley, {\it Matroid Theory}, Oxford Graduate Texts in Mathematics, Oxford, (1993).

\end{thebibliography}
\end{document}